\documentclass[11pt,reqno]{amsart}
\usepackage{amsfonts,amssymb,amsthm}
\usepackage{amsmath,amscd}
\usepackage{pstricks}
\usepackage{pstricks,pst-node}
\usepackage{mathrsfs}
\usepackage[all]{xy}

\DeclareMathAlphabet{\mathpzc}{OT1}{pzc}{m}{it}

\setlength{\textwidth}{6.3in} \setlength{\textheight}{9.2in}
\setlength{\hoffset}{-.5in} \setlength{\voffset}{-.5in}
\setlength{\footskip}{20pt}

\renewcommand{\subsection}[1]{\vspace{.18in}
\par\noindent\addtocounter{subsection}{1}
\setcounter{equation}{0}{\bf\thesubsection.\hspace{5pt}#1}}

\theoremstyle{definition}

\theoremstyle{plain}
\newtheorem{Prop}[subsection]{Proposition}
\newtheorem{Thm}[subsection]{Theorem}
\newtheorem{MThm}[subsection]{Main Theorem}
\newtheorem{Not}[subsection]{Further Notations}
\newtheorem{Lem}[subsection]{Lemma}
\newtheorem{Coro}[subsection]{Corollary}
\numberwithin{equation}{subsection}

\newcommand{\su}{{}^{}}
\def\su#1{^{#1}}


\newcommand{\tA}{{}^t\!A}


\newcommand{\bin}{\bigcup}

\newcommand{\han}{\subseteq}

\newcommand{\bsq}{{\boldsymbol{q}}}

\newcommand{\lan}{\langle}
\newcommand{\ran}{\rangle}
\newcommand{\dleb}{\left[\!\!\left[}
\newcommand{\leb}{\left[}
\newcommand{\drib}{\right]\!\!\right]}
\newcommand{\rib}{\right]}
\newcommand{\bbl}{\big[}
\newcommand{\bbr}{\big]}
\newcommand{\dbbl}{\big[\!\!\big[}
\newcommand{\dbbr}{\big]\!\!\big]}

\def\dblr#1{[\![#1]\!]}
\def\lr#1{\langle #1\rangle}

\def\ggp#1#2{\left[\kern-3.2pt\left[{#1\atop #2}\right]\kern-3.2pt\right]}

\def\fS{{\frak S}}

\def\fB{{\frak B}}

\def\fka{{\frak a}}

\newcommand{\msY}{\mathpzc Y}
\newcommand{\msZ}{\mathpzc Z}

\newcommand{\msD}{\mathscr D}\newcommand{\afmsD}{{\mathscr D}^\vtg}

\newcommand{\affSr}{{\fS_{\vtg,r}}}

\newcommand{\afHr}{{\sH_\vtg(r)}}

\newcommand{\afbfHr}{{\boldsymbol{\mathcal H}_\vtg(r)}}

\def\sD{{\mathcal D}}

\def\sH{{\mathcal H}}

\def\sN{{\mathcal N}}

\def\sS{{\mathcal S}}

\def\sU{{\mathcal U}}
\def\sV{{\mathcal V}}
\def\sW{{\mathcal W}}

\def\sY{{\mathcal Y}}
\def\sZ{{\mathcal Z}}

\newcommand{\vtg}{{\!\vartriangle\!}}
\newcommand{\Hall}{{{\mathfrak H}_\vtg(n)}}

\newcommand{\bfHall}{{\boldsymbol{\mathfrak H}_\vtg(n)}}

\newcommand{\dbfHa}{{\boldsymbol{\mathfrak D}_\vtg}(n)}
\newcommand{\dbfHap}{{\boldsymbol{\mathfrak D}^+_\vtg}(n)}
\newcommand{\dbfHam}{{\boldsymbol{\mathfrak D}^-_\vtg}(n)}
\newcommand{\dbfHaz}{{\boldsymbol{\mathfrak D}^0_\vtg}(n)}

\def\field{{\mathbb F}}

\newcommand{\mbnn}{\mathbb N^{n}}

\newcommand{\mnmod}{\!\!\!\mod\!}

\newcommand{\tri}{\triangle(n)}

\newcommand{\afgl}{\widehat{\frak{gl}}_n}

\newcommand{\afE}{E^\vartriangle}

\newcommand{\cycn}{{{n}}}
\newcommand{\afSr}{{\mathcal S}_{\vtg}(\cycn,r)}

\newcommand{\afbfSr}{{\boldsymbol{\mathcal S}}_\vtg(\cycn,r)}
\newcommand{\afbfS}{{\boldsymbol{\mathcal S}}_\vtg}

\newcommand{\afbse}{\boldsymbol e^\vartriangle}
\newcommand{\afmbnn}{\mathbb N_\vtg^{\cycn}}
\newcommand{\afmbzn}{\mathbb Z_\vtg^{\cycn}}

\newcommand{\afLa}{\Lambda_\vtg}
\newcommand{\afLanr}{\Lambda_\vtg(\cycn,r)}

\newcommand{\afThn}{\Theta_\vtg(\cycn)}

\newcommand{\afThnpm}{\Theta_\vtg^\pm(\cycn)}
\newcommand{\afThnp}{\Theta_\vtg^+(\cycn)}
\newcommand{\afThnm}{\Theta_\vtg^-(\cycn)}

\newcommand{\afThnr}{\Theta_\vtg(\cycn,r)}

\newcommand{\afMnz}{M_{\vtg,\cycn}(\mathbb Z)}
\newcommand{\afMnc}{M_{\vtg,\cycn}(\mathbb C)}
\newcommand{\afMnn}{M_{\vtg,\cycn}(\mathbb N)}

\newcommand{\afbfVn}{\boldsymbol{\sV}_\vtg(n)}
\newcommand{\afbfVnp}{\boldsymbol{\sV}_\vtg^+(n)}

\newcommand{\afbfVnz}{\boldsymbol{\sV}_\vtg^0(n)}

\def\leq{\leqslant}\def\geq{\geqslant}
\def\le{\leqslant}\def\ge{\geqslant}

\newcommand{\dt}{\bfj}

\def\de{{\delta}}

\newcommand{\og}{\omega}

\newcommand{\vi}{\varphi}

\newcommand{\up}{\boldsymbol{v}}

 \newcommand{\al}{\alpha}
 \newcommand{\bt}{\beta}
 \newcommand{\h}{\widehat}
 \newcommand{\ti}{\widetilde}
\newcommand{\zr}{\zeta_r}

\newcommand{\sg}{\sigma}

\def\th{\theta}

\newcommand{\p}{\prec}

\newcommand{\bop}{\bigoplus}

\newcommand{\ot}{\otimes}

\newcommand{\bfl}{\mathbf{0}}

\newcommand{\ol}{\overline}
\newcommand{\lra}{\longrightarrow}
\newcommand{\ra}{\rightarrow}
 \newcommand{\la}{{\lambda}}

 \newcommand{\La}{\Lambda}

 \newcommand{\mbn}{\mathbb N}
 \newcommand{\mbq}{\mathbb Q}
 \newcommand{\mbc}{\mathbb C}
 \newcommand{\mbz}{\mathbb Z}
 
 \newcommand{\bfi}{{\mathbf{i}}}
  \newcommand{\bfd}{{\mathbf{d}}}
 
 \newcommand{\bfj}{{\mathbf{j}}}

\newcommand{\bfU}{{\mathbf{U}}}

\newcommand{\ga}{{\gamma}}

\newcommand{\End}{\operatorname{End}}

\newcommand{\spann}{\operatorname{span}}
\newcommand{\diag}{\operatorname{diag}}

\def\ro{\text{\rm ro}}
\def\co{\text{\rm co}}


\newcommand{\ul}{\underline}

\def\afsygr{{\fS_{\vtg,r}}}

\def\fkH{\boldsymbol{\mathfrak H}}

\begin{document}
\title[quantum affine $\frak{gl}_n$]{Quantum affine $\frak{gl}_n$ via Hecke algebras}

\author{Jie Du}
\address{School of Mathematics and Statistics, University of New South Wales,
Sydney 2052, Australia.} \email{j.du@unsw.edu.au}
\author{Qiang Fu$^\dagger$}

\address{Department of Mathematics, Tongji University, Shanghai, 200092, China.}
\email{q.fu@tongji.edu.cn}

\date{\today}

\thanks{$^\dagger$Corresponding author.}
\thanks{Supported by the National Natural Science Foundation
of China, the Program NCET, Fok Ying Tung Education Foundation,
 the Fundamental Research Funds for the Central Universities, and the Australian Research Council DP120101436}

\begin{abstract} We use the Hecke algebras of affine symmetric groups and their associated Schur algebras to construct a new algebra through a basis, and a set of generators and explicit multiplication formulas of basis elements by generators. We prove that this algebra is isomorphic to the quantum enveloping algebra of the loop algebra of $\mathfrak {gl}_n$. Though this construction is motivated by the work \cite{BLM} by Beilinson--Lusztig--MacPherson for quantum $\frak{gl}_n$,  our approach is purely algebraic and combinatorial, independent of the geometric method which seems to work only for quantum $\mathfrak{gl}_n$ and quantum affine $\mathfrak{sl}_n$. As an application, we discover a presentation of the Ringel--Hall algebra of a cyclic quiver by semisimple generators and their multiplications by the defining basis elements. 
\end{abstract}
 \sloppy \maketitle
\section{Introduction}
The quantum enveloping algebra of the loop algebra of $\mathfrak {gl}_n$, or simply quantum affine $\mathfrak {gl}_n$, has two usual definitions, the $R$-matrix one and the Drinfled one, known as Drinfeld's new realisation. Both are presented by generators and relations (see, e.g., \cite[\S2.3]{FM} and the references therein). In \cite[2.3.1,2.5.3]{DDF}, a third presentation is given via the double Ringle--Hall algebra. In this presentation, the Ringel--Hall algebra of the cyclic quiver and its opposite algebra become the $\pm$-part of quantum affine $\mathfrak {gl}_n$. Thus, with this construction, one may consider semisimple or indecomposable generators for quantum affine $\mathfrak {gl}_n$, defined by the semisimple or indecomposable representations of the quiver; see \cite[\S1.4]{DDF}.
In particular, one sees easily the fact that the subalgebra generated by simple generators is a {\it proper} subalgebra. This subalgebra is isomorphic to the quantum affine $\mathfrak {sl}_n$.

The double Ringel--Hall algebra construction of quantum affine $\mathfrak {gl}_n$ is an affine generalisation of a similar construction for a quantum enveloping algebra $\bfU$ of a finite type quiver via a Ringel--Hall algebra which, as the positive or negative  part of $\bfU$, is spanned by the basis of isoclasses of representations of the quiver and whose multiplication is defined by Hall polynomials, see \cite{R90,R932,X97}. 
However, there is another construction for quantum $\mathfrak {gl}_n$  by Beilinson, Lusztig and MacPherson \cite[5.7]{BLM}, which directly displays a basis for the {\it entire} quantum enveloping algebra $\bfU(\mathfrak{gl}_n)$ and displays the multiplication rules by explicit formulas of basis elements by generators. This construction is geometric in nature and has been {\it partially} generalised to the affine case (more precisely, to affine $\mathfrak {sl}_n$) in \cite{GV,Lu99, VV99}. 

BLM's geometric approach uses the definition of quantum Schur algebras as the convolution algebras of (partial) flag varieties over finite fields and then, by a process of ``quantumization'', to get a construction over the polynomials ring. 
Progress on generalising BLM's work to the affine case via an algebraic approach has been made in the works \cite{DF09,DDF,Fu}. In particular, a realisation conjecture \cite[5.5(2)]{DF09} for quantum affine $\frak{gl}_n$ was formulated and was proved in the classical ($v=1$) case in \cite[Ch. 6]{DDF}. We will prove this conjecture in this paper.
 We will use directly the definition of quantum Schur algebras as endomorphism algebras of certain $\up$-permutation modules over the affine Hecke algebra which has a basis indexed by certain double cosets
of the affine symmetric group. The double cosets associated with semisimple representations will play a key role in the establishment of multiplication rules of BLM type basis elements by semisimple generators.

It should be pointed out that a recent work by Bridgeland constructs quantum enveloping algebras via Hall algebras of complexes and a complete realisation \cite[Th.~4.9]{Bri} is obtained for simply-laced finite types; see
\cite{SY} for the general (finite type) case. We obtain here a complete realisation for quantum affine $\mathfrak{gl}_n$.

We now describe the main result of the paper. For a positive integer $n$, let
$\afgl:=\afMnc$  be the loop algebra of $\mathfrak{gl}_n(\mbc)$ consisitng of all matrices
$A=(a_{i,j})_{i,j\in\mbz}$ with $a_{i,j}\in\mbc$ such that
\begin{itemize}
\item[(a)]$a_{i,j}=a_{i+n,j+n}$ for $i,j\in\mbz$; \item[(b)] for
every $i\in\mbz$, both sets $\{j\in\mbz\mid a_{i,j}\not=0\}$ and
$\{j\in\mbz\mid a_{j,i}\not=0\}$ are finite.
\end{itemize}
A basis for $\afgl$ can be described as $\{\afE_{i,j}\mid i,j\in\mbz\}$, where the matrix $\afE_{i,j}=(e^{i,j}_{k,l})_{k,l\in\mbz}$ is defined by
\begin{equation*}e_{k,l}^{i,j}=
\begin{cases}1&\text{if $k=i+sn,l=j+sn$ for some $s\in\mbz$,}\\
0&\text{otherwise}.\end{cases}
\end{equation*}
Let  $\afThn=\afMnn$ be the $\mbn$-span of the basis. Then $\afThn$ serves as the index set of
the PBW basis of the universal enveloping algebra $\sU(\afgl)$. Let $\bfU(\afgl)$ be the quantum enveloping algebra of $\afgl$ over $\mbq(\up)$ and let
$$\afThnpm=\{A\in\afThn\mid a_{i,i}
=0\text{ for all $i$}\}\text{ and }\afmbzn=\{(\la_i)_{i\in\mbz}\mid
\la_i\in\mbz,\,\la_i=\la_{i-n}\ \text{for}\ i\in\mbz\}.$$
Then $\afThnpm\times\afmbzn$ serves as an index set of a PBW type basis for $\bfU(\afgl)$ (see, e.g., \cite[1.4.6]{DDF}). We will construct a new basis $\{A(\bfj)\mid A\in \afThnpm,\bfj\in\afmbzn\}$ and prove the following main result.
\begin{MThm}\label{MThm} The quantum enveloping algebra $\bfU(\afgl)$ is the $\mbq(\up)$-algebra which is spanned by the basis $\{A(\bfj)\mid A\in \afThnpm,\bfj\in\afmbzn\}$ and generated by $0(\bfj)$, $S_\al(\bfl)$ and ${}^t\!S_\al(\bfl)$ for all $\bfj\in\afmbzn$ and $\al\in\mbnn$, where $S_\al=\sum_{1\leq i\leq n}\al_i\afE_{i,i+1}$ and ${}^t\!S_\al$ is the transpose of $S_\al$,  and whose
multiplication rules are given by the formulas in Proposition \ref{B(bfl,r)A(bfj,r)}(1)--(3).
\end{MThm}

We organise this paper as follows. We first recall in \S2 some preliminary results on the double Ringel--Hall algebra of a cyclic quiver and affine quantum Schur algebras. In particular, we display a PBW type basis for the former and a basis defined by certain double cosets  for the latter.  In \S3, we derive in the affine quantum Schur algebra some multiplication formulas (Theorem \ref{[B][A]}) of the basis elements by those associated with semisimple representations of the cyclic quiver. We then prove the main result via Theorem \ref{realization} in \S4. As an application of the work, we obtain certain multiplication formulas in the Ringel--Hall algebra which are not directly seen from the Hall algebra multiplication. In the Appendix, we give a proof for the length formula of the shortest representative of a double coset defined by a matrix.


\begin{Not}\label{Notaion} \rm We need the following index sets for bases of the triangular parts of $\bfU(\afgl)$.
Let 
$$\afThnp:=\{A\in\afThn\mid a_{i,j}=0\text{ for }i\geq j\}\text{ and }\afThnm=\{A\in\afThn\mid a_{i,j}=0\text{ for }i\leq j\}.$$
For $A\in\afThn$, we write
\begin{equation}\label{A^+,A^-,A^0}
A=A^\pm+A^0=A^++A^0+A^-
\end{equation}
where $A^\pm\in\afThnpm$, $A^+\in\afThnp$,
$A^-\in\afThnm$ and $A^0$ is a diagonal matrix.

Further,  for $r\geq 0$, let $\afmbnn=\{(\la_i)_{i\in\mbz}\in \afmbzn\mid \la_i\ge0\text{ for  }i\in\mbz\}$ and let
\begin{equation*}
\afThnr=\{A\in\afThn\mid\sg(A)=r\} \text{ and }
\afLanr=\{\la\in\afmbnn\mid\sg(\la)=r\}
\end{equation*}
where $\sg(A)=\sum_{1\leq i\leq n,\,
j\in\mbz}a_{i,j}$ and $\sg(\la)=\sum_{1\leq i\leq n}\la_i$. Note that $\afThnr$ is
the index set of a basis for a certain quotient algebra of $\bfU(\afgl)$---the affine quantum Schur algebras.


Moreover, we will use the standard notation for Gaussian polynomials. Thus,
let $\sZ=\mbz[\up,\up^{-1}]$, where $\up$ is an indeterminate, and let $\mbq(\up)$ be the fraction field of $\sZ$.
For integers $N,t$ with $t\geq 0$ and $\mu\in\afmbzn$ and $\la\in\afmbnn$, let
\begin{equation*}
\dleb{N\atop t}\drib=\prod\limits_{1\leq
i\leq t}\frac{\up^{2(N-i+1)}-1}{\up^{2i}-1}
\,\,\text{ and }\,\,\dleb{\mu\atop\la}\drib=\prod_{1\leq i\leq n}\dleb{\mu_i\atop\la_i}\drib.
\end{equation*}
 Then
 $\dleb{N\atop
t}\drib=\frac{\dblr{N}\dblr{N-1}\cdots \dblr{N-t+1}}{\dblr{t}^!}$, where
$\dblr{t}^!=\dblr{1}\dblr{2}\cdots \dblr{t}$ with $[\![m]\!]=\frac{\up^{2m}-1}{\up^2-1}$. We also need the symmetric Gaussian polynomials
$\leb{N\atop t}\rib=\up^{-t(N-t)}\dleb{N\atop t}\drib.$ For $\la, \la^{(1)},\ldots,\la^{(m)}\in\afmbnn$ with $\la=\la^{(1)}+\cdots+\la^{(m)}$, we also need the following notation in \ref{presentation-dbfHa}(2)(e)

$$\dleb{\la\atop\la^{(1)},\ldots,\la^{(m)}}\drib=\prod_{1\leq i\leq n}\frac{[\![\la_i]\!]^!}{[\![\la^{(1)}_i]\!]^!\cdots[\![\la^{(m)}_i]\!]^!}.$$
\end{Not}

\section{Preliminary results}

In this section, we briefly discuss the affine symmetric group and its associated Hecke algebra, the affine $\up$-Schur algebra, the double Hall algebra interpretation of affine $\mathfrak{gl}_n$ and the connections between them.

Let $\afsygr$ be the {\it affine symmetric group} consisting of all permutations
$w:\mbz\ra\mbz$ satisfying $w(i+r)=w(i)+r$ for $i\in\mbz$.
Let $W_r$ be the subgroup of
$\afsygr$, the {\it Weyl group} of affine type $A$, generated by $S=\{s_i\}_{1\leq i\leq r}$, where $s_i$  is defined by
$s_i(j)=j$ for $j\not\equiv i,i+1\mnmod r$, $s_i(j)=j-1$ for
$j\equiv i+1\mnmod r$, and $s_i(j)=j+1$ for $j\equiv i\mnmod r$.
Let $\rho$ be the permutation of $\mbz$ sending $j$ to $j+1$ for all $j\in\mbz$. We extend the length function $\ell$ on $W_r$ to $\afsygr$ by setting $\ell(\rho^mw)=\ell(w)$ for all $m\in\mbz,w\in W_r$.

The (extended) affine Hecke algebra $\afHr$ over $\sZ$ associated to
$\affSr$ is the $\sZ$-algebra which is spanned by (basis)
$\{T_w\}_{w\in\affSr}$ and generated by $T_\rho,T_{\rho^{-1}},T_s, s\in S$, and whose multiplication rules are given by the formulas, for all $s\in S$ and $w\in\fS_{\vtg,r}$,
\begin{equation*}
\aligned
T_sT_w&=\begin{cases} (\up^2-1)T_{w}+\up^2T_{sw},\quad&\text{ if }\ell(sw)<\ell(w);\\
                                         T_{sw},\quad&\text{ if } \ell(sw)=\ell(w)+1,\end{cases}\\
T_\rho T_w&=T_{\rho w}.\\
\endaligned
\end{equation*}
Let $\afbfHr=\afHr\ot_\sZ\mbq(v)$. We will discover a similar description for quantum affine $\mathfrak{gl}_n$.

For $\la\in\afLanr$, let $\fS_\la:=\fS_{(\la_1,\ldots,\la_n)}$
be the corresponding standard Young subgroup of the symmetric group $\fS_r$.
For a finite subset $X\han\affSr$, let $$T_X=\sum_{x\in
X}T_x\in\afHr\;\;\text{ and }\;\; x_\la=T_{\fS_\la}.$$
 The endomorphism algebras over $\sZ$ or $\mbq(\up)$
$$\sS_\vtg(n,r):=\End_{\afHr}\biggl
(\bop_{\la\in\La_\vtg(n,r)}x_\la\afHr\biggr)\,\text{ and }\,\afbfSr:=\End_{\afbfHr}\biggl
(\bop_{\la\in\La_\vtg(n,r)}x_\la\afbfHr\biggr)$$
are called {\it affine quantum Schur algebras} or, more specifically, {\it affine $\up$-Schur algebras} (cf. \cite{GV,Gr99,Lu99}).
Note that $\afbfSr\cong\afSr\ot_\sZ\mbq(v)$.

For $\la\in\afLanr$, denote the set of shortest representatives of right cosets of $\fS_\la$ in $\afSr$ by
$$\afmsD_\la=\{d\mid d\in\affSr,\ell(wd)=\ell(w)+\ell(d)\text{ for
$w\in\fS_\la$}\}.$$
Note that elements in $\afmsD_\la$ can be characterised as follows:
\begin{equation}\label{minimal coset representative}
\aligned
d^{-1}\in\afmsD_\la
&\iff d(\la_{0,i-1}+1)<d(\la_{0,i-1}+2)<\cdots<d(\la_{0,i-1}+\la_i),\,\forall 1\leq i\leq n,\endaligned
\end{equation}
where $\la_{0,i-1}:=\sum_{1\leq t\leq i-1}\la_t$. Moreover, $\afmsD_{\la,\mu}:=\afmsD_{\la}\cap{\afmsD_{\mu}}^{-1}$ is the set of shortest representatives of $(\fS_\la,\fS_\mu)$ double cosets.

For $\la,\mu\in\afLanr$ and $d\in\afmsD_{\la,\mu}$, define
$\phi_{\la,\mu}^d\in\sS_\vtg(n,r)$ by 
\begin{equation*}\label{def of standard basis}
\phi_{\la,\mu}^d(x_\nu h)=\de_{\mu\nu}\sum_{w\in\fS_\la
d\fS_\mu}T_wh
\end{equation*}
where $\nu\in\afLanr$ and $h\in\afHr$. Then by \cite{Gr99} the set
$\{\phi_{\la,\mu}^d\mid \la,\mu\in\afLanr,\,
d\in\afmsD_{\la,\mu}\}$ forms a $\sZ$-basis for $\sS_\vtg(n,r)$.

For $1\leq i\leq n$, $k\in\mbz$ and $\la\in\afLa(n,r)$ let $\la_{k,i-1}:=kr+\sum_{1\leq t\leq i-1}\la_t$ and
\begin{equation*}
R_{i+kn}^{\la}=\{\la_{k,i-1}+1,\la_{k,i-1}+2,\ldots,\la_{k,i-1}+\la_i=\la_{k,i}\},
\end{equation*}
By \cite[7.4]{VV99} (see also \cite[9.2]{DF09}), there is
a bijective map
\begin{equation*}
{\jmath_\vtg}:\{(\la, d,\mu)\mid
d\in\afmsD_{\la,\mu},\la,\mu\in\afLanr\}\lra\afThnr
 \end{equation*}
sending $(\la, w,\mu)$ to $A=(a_{k,l})$, where $a_{k,l}=|R_k^\la\cap wR_l^\mu|$ for all $k,l\in\mbz$.
For $A\in\afThnr$ let $e_A=\phi_{\la,\mu}^d$ where $A=\jmath_\vtg(\la,d,\mu)$. Furthermore, let
\begin{equation}\label{nbasis}
[A]=\up^{-d_A}e_{A},\quad\text{ where } \quad
d_{A}=\sum_{1\leq i\leq n\atop i\geq k,j<l}a_{i,j}a_{k,l}.
\end{equation}
Later, in \ref{eBeA} and \ref{[B][A]}, we will consider basis elements associated with matrices of the form $M=A+T-\tilde T$ for some $T,\ti T\in\afThn$. We will automatically set $e_M=0=[M]$ if one of the entry of $M$ is zero.

 For $A\in\afThnpm$ and $\dt\in\afmbzn$, define elements in
$\afbfSr$:
\begin{equation}\label{A(dt,la,r),A(dt,r)}
\begin{split}
A(\dt,r)&=\sum_{\mu\in\afLa(n,r-\sg(A))}v^{\mu\centerdot\dt}
[A+\diag(\mu)];\;\;\;\qquad(\text{cf. \cite{BLM}})\\
\end{split}
\end{equation}
where $\mu\centerdot\dt=\sum_{1\leq i\leq n}\mu_i\dt_i$. The set $\{A(\bfj,r)\}_{A\in\afThnpm, \dt\in\afmbzn}$ spans $\afbfSr$.

\vspace{.3cm}

Let $\tri$ ($n\geq 2$) be
the cyclic quiver 
with vertex set $I=\mbz/n\mbz=\{1,2,\ldots,n\}$ and arrow set
$\{i\to i+1\mid i\in I\}$. Let $\field$ be a field. For $i\in I$, let $S_i$
be the irreducible nilpotent representation of $\tri$ over $\field$ with $(S_i)_i=\field$ and $(S_i)_j=0$ for $i\neq j$.
For any $A=(a_{i,j})\in\afThnp$, let
$$M(A)=M_\field(A)=\bop_{1\leq i\leq n\atop i<j,\,j\in\mbz}a_{i,j}M^{i,j},$$
where
$M^{i,j}=M(\afE_{i,j})$ is the unique indecomposable nilpotent representation for $\tri$ of length $j-i$ with top $S_i$.
Thus, the set $\{M(A)\}_{A\in\afThnp}$ is a complete set of representatives of isomorphism classes of finite dimensional nilpotent representations of $\tri$.

The Euler form associated with the cyclic quiver $\tri$ is the
bilinear form $\lan-,-\ran$: $\afmbzn\times\afmbzn\ra\mbz$ defined by
$\lan\la,\mu\ran=\sum_{1\leq i\leq n}\la_i\mu_i-\sum_{1\leq i\leq n}\la_i\mu_{i+1}$
for $\la,\mu\in\afmbzn$.

By \cite{Ri93}, for $A,B,C\in\afThnp$,
let $\vi^{C}_{A,B}\in\mbz[\up^2]$ be the Hall polynomials such
that, for any finite field $\field_q$,
$\vi^{C}_{A,B}|_{\up^2=q}$ is equal to the number of submodules $N$ of
$M_{\field_q}(C)$ satisfying $N\cong M_{\field_q}(B)$ and $M_{\field_q}(C)/N\cong M_{\field_q}(A)$.

 By definition, the (generic) twisted {\it Ringel--Hall algebra} $\fkH_\vtg(n)$ of $\tri$ is the $\mbq(\up)$-algebra spanned by basis
$\{u_A=u_{[M(A)]}\mid A\in\afThnp\}$ whose multiplication is defined by, for all $A,B\in \afThnp$,
$$u_{A}u_{B}=\up^{\lan \bfd (A),\bfd (B)\ran}\sum_{C\in\afThnp}\vi^{C}_{A,B}u_{C},$$
where $\bfd(A)\in\mbn I$ is the dimension vector of $M(A)$.

By extending $\bfHall$ to Hopf algebras (see \ref{presentation-dbfHa}(2)(b) for multiplication)
$$\fkH_\vtg(n)^{\geq0}=\fkH_\vtg(n)\otimes \mbq(\up)[K_1^{\pm1},\ldots,K_n^{\pm1}]\text{ and }
\fkH_\vtg(n)^{\leq0}= \mbq(\up)[K_1^{\pm1},\ldots,K_n^{\pm1}]\otimes\fkH_\vtg(n)^{\rm op},$$
we define the double Ringel--Hall algebra $\dbfHa$ (cf. \cite{X97} and \cite[(2.1.3.2)]{DDF}) to be a quotient algebra of the free product $\fkH_\vtg(n)^{\geq 0}* \fkH_\vtg(n)^{\leq 0}$ via a certain skew Hopf paring $\psi:\fkH_\vtg(n)^{\geq 0}\times \fkH_\vtg(n)^{\leq 0}\ra \mbq(\up)$. In particular, there is a triangular decomposition
$$\dbfHa=\dbfHap\otimes\dbfHaz\otimes\dbfHam,$$
where $\dbfHap=\fkH_\vtg(n)$, $\dbfHaz= \mbq(\up)[K_1^{\pm1},\ldots,K_n^{\pm1}]$ and $\dbfHam=\fkH_\vtg(n)^{\rm op}$.

For $\al\in\afmbnn$ let
\begin{equation}\label{semisimple}
S_\al=\sum_{1\leq i\leq n}\al_i\afE_{i,i+1}\in\afThnp.
\end{equation}
Then $M(S_\al)=\oplus_{1\leq i\leq n}\al_iS_i$ is a semisimple representation of $\tri$.
Let $u_\al=u_{S_\al}$. 
 
 For $\al,\beta\in\afmbzn$, define a partial order on $\afmbzn$ by setting
 \begin{equation}\label{order1}
 \al\le\beta \iff \al_i\leq \beta_i\text{ for all }i\in\mbz.
 \end{equation}
 We now collects some of the results we need later, see \cite[Th.~2.5.3]{DDF} for part (1) and  \cite[2.6.7]{DDF} for part (2)(e).

\begin{Thm} \label{presentation-dbfHa}
\begin{itemize}
\item[(1)] Let $\bfU(\afgl)$ be the quantum enveloping algebra of the loop algebra of $\mathfrak{gl}_n$ defined
in \cite{Dr88} or \cite[\S2.5]{DDF}. Then
there is a Hopf algebra isomorphism $\dbfHa\cong\bfU(\afgl)$.
\item[(2)] The algebra $\dbfHa$ is the algebra over $\mbq(\up)$ which is spanned by basis
$$\{u_{A}^+K^\bfj u_{A}^-\mid A\in\afThnp,\bfj\in\afmbzn\}, \text{ where }
K\su\bfj=K_1^{j_1}\cdots K_n^{j_n},$$ and generated by
$u_\al^+$, $K_{i}^{\pm 1}$, $u_\beta^-$ $(\al,\beta\in\afThnp,\,1\leq i\leq n)$, and whose multiplication is given by
the following relations:
\begin{itemize}
\item[(a)]
$K_iK_j=K_jK_i$, $K_iK_i^{-1}=1$;
\item[(b)]
$K\su{\bfj} u_A^+=\up^{\lr{\bfd(A),\bfj}}u_A^+K\su\bfj$,
$u_A^-K\su\bfj=\up^{\lr{\bfd(A),\bfj}}K\su\bfj u_A^-$;
\item[(c)]
$u_\al^+u_A^+=\sum_{C\in\afThnp}\up^{\lan \al,\bfd(A)\ran}\vi_{S_\al,A}^C u_C^+$;
\item[(d)]
$u_\beta^-u_A^-=\sum_{C\in\afThnp}\up^{\lan \bfd(A),\beta\ran}\vi_{A,S_\beta}^C u_C^-$;
\item[(e)] For $\la,\mu\in\afmbnn$,
$u_\mu^-u_\la^+-u_\la^+ u_\mu^-=\displaystyle
\sum_{\al\not=0,\,\al\in\afmbnn\atop\al\leq\la,\,\al\leq\mu}\sum_{0\leq\ga\leq\al}
x_{\al,\ga}\ti K^{2\ga-\al} u_{\la-\al}^+u_{\mu-\al}^-,$ where
$$\aligned
x_{\al,\ga}&=\up^{\lr{\al,\la-\al}+\lr{\mu,2\ga-\al}+2\lr{\ga,\al-\ga-\la}+2\sg(\al)} 
\dleb{\la\atop\al-\ga, \la-\al,\ga}\drib\dleb{\mu\atop\al-\ga,
 \mu-\al,\ga}\drib\frac{\fka_{\al-\ga}\fka_{\la-\al} \fka_{\mu-\al}}{\fka_\la \fka_\mu}\\
 &\quad\, \times\sum_{m\geq 1,\ga^{(i)}\not=0\,\forall
i\atop\ga^{(1)}+\cdots+\ga^{(m)}=\ga}(-1)^m\up^{2\sum_{i<j}\lan\ga^{(i)},\ga^{(j)}\ran}
\fka_{\ga^{(1)}}\cdots
\fka_{\ga^{(m)}}\dleb\ga\atop\ga^{(1)},\ldots,\ga^{(m)}\drib^2
\endaligned$$
with $\fka_\beta=\displaystyle\prod_{i=1}^n\prod_{s=1}^{\beta_i}(\up^{2\beta_i}-\up^{2(s-1)})$ as defined in \cite[Lem.~3.9.1]{DDF}
and $\ti K\su\nu :=(\ti K_1)^{\nu_1}\cdots(\ti K_n)^{\nu_n}$ with $\ti K_i=K_iK_{i+1}^{-1}$ for $\nu\in\afmbzn$.
\end{itemize}
\end{itemize}
\end{Thm}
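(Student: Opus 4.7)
The plan is to derive the Main Theorem directly from two structural results that form the technical core of the paper: the realization Theorem \ref{realization}, which identifies $\bfU(\afgl)$ with the stabilization algebra of the tower $\{\afbfSr\}_{r\geq 1}$ of affine quantum Schur algebras and produces the basis $\{A(\bfj)\}_{A\in\afThnpm,\,\bfj\in\afmbzn}$ as the stable limit of the BLM-type elements $A(\bfj,r)$ defined in \eqref{A(dt,la,r),A(dt,r)}; and the multiplication formulas of Theorem \ref{[B][A]}, which compute the products $[B][A]$ in $\afbfSr$ when $B$ is built from a semisimple matrix $S_\al$ or its transpose, whose stabilized form is recorded in Proposition \ref{B(bfl,r)A(bfj,r)}(1)--(3).

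Given these two inputs, the basis statement and the multiplication rules are immediate. The isomorphism of Theorem \ref{realization} transports the elements $A(\bfj)$ into a basis of $\bfU(\afgl)$, and the three families of formulas in Proposition \ref{B(bfl,r)A(bfj,r)}(1)--(3) describe the left action of the candidate generators $0(\bfj)$, $S_\al(\bfl)$ and ${}^t\!S_\al(\bfl)$ on any basis element $A(\bfj')$. The only nontrivial claim left is that these three families of elements generate $\bfU(\afgl)$ as a $\mbq(\up)$-algebra.

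For the generation claim, I would use the triangular decomposition $\dbfHa\cong\dbfHap\otimes\dbfHaz\otimes\dbfHam$ of Theorem \ref{presentation-dbfHa} and reduce to showing: (i) the family $\{0(\bfj)\}_{\bfj\in\afmbzn}$ generates the Cartan part, which is clear since $0(\bfj)\leftrightarrow K\su{\bfj}$ under the identification $\bfU(\afgl)\cong\dbfHa$; (ii) $\{S_\al(\bfl)\}_{\al\in\afmbnn}$ generates the positive part $\dbfHap\cong\fkH_\vtg(n)$; and (iii) the analogous statement for $\{{}^t\!S_\al(\bfl)\}_{\al}$ and $\dbfHam$, which follows from (ii) by transposition symmetry. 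For (ii), I would induct on $\afThnp$ with respect to a suitable total refinement of the partial order \eqref{order1} (for example, first by the dimension vector, then by a segment-multiplicity invariant of the Ringel--Hall PBW basis). The inductive step is driven by the multiplication formula of Proposition \ref{B(bfl,r)A(bfj,r)}(2): for each target $A\in\afThnp$ one selects a semisimple vector $\al$ and a strictly smaller $C\in\afThnp$ such that $A(\bfl)$ appears with a nonzero coefficient in $S_\al(\bfl)\cdot C(\bfl)$, the remaining summands being either below $A$ in the chosen order (hence already in the generated subalgebra) or removable by a Cartan correction via the $0(\bfj)$.

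The main obstacle is step (ii). For finite type $A$ the analogous statement is immediate because the simple modules already generate the Hall algebra; for the cyclic quiver $\tri$, however, indecomposables of arbitrary length exist and expressing a PBW basis element $u_A^+$ explicitly in terms of the semisimple generators $u_\al^+$ is delicate, since the Gabriel-type indecomposable ordering is no longer available. The necessary leverage is the explicit triangularity of the formula in Proposition \ref{B(bfl,r)A(bfj,r)}(2): once the order on $\afThnp$ is set up to be compatible with the combinatorial structure of the sum (which is indexed by matrix decompositions weighted by Gaussian binomials $\dleb{\mu\atop\la}\drib$), one identifies a distinguished leading matrix in the sum and verifies that all other summands are strictly smaller. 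This last verification is the decisive combinatorial step and is precisely the presentation of the Ringel--Hall algebra by semisimple generators that the abstract highlights as a byproduct of the work.
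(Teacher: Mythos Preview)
Your proposal does not address the stated theorem. The statement in question is Theorem~\ref{presentation-dbfHa}, which records the isomorphism $\dbfHa\cong\bfU(\afgl)$ together with a presentation of $\dbfHa$; this is not proved in the present paper at all but is quoted from \cite[Th.~2.5.3, 2.6.7]{DDF} as background (see the sentence immediately preceding the theorem). Your write-up is instead a sketch of the Main Theorem~\ref{MThm}, and in fact \emph{uses} Theorem~\ref{presentation-dbfHa} as an input when you invoke the triangular decomposition $\dbfHa\cong\dbfHap\otimes\dbfHaz\otimes\dbfHam$. So the target and the argument do not match.

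If your intent was to outline the proof of Main Theorem~\ref{MThm}, the overall shape is compatible with the paper's, but one step is handled differently. The generation of $\dbfHap$ by the semisimple elements $u_\al^+$ is not obtained here via the inductive triangularity argument you propose on the multiplication formula of Proposition~\ref{B(bfl,r)A(bfj,r)}(2); the paper simply cites \cite[6.2]{DDX} for this fact (see the proof of Proposition~\ref{triangular formula in A(bfj)}). The paper then combines \cite[6.2]{DDX} with Proposition~\ref{B(bfl,r)A(bfj,r)} and the triangular relation \cite[3.7.3]{DDF} to obtain Proposition~\ref{triangular formula in A(bfj)}, and together with the linear independence of $\fB$ from \cite[4.1]{DF09} concludes in Theorem~\ref{realization} that $\zeta$ sends a basis of $\dbfHa$ bijectively to a basis of $\afbfVn$; the Main Theorem follows immediately. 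Your inductive plan for (ii), by contrast, would amount to reproving the cited result of \cite{DDX}, and the partial order you suggest (a refinement of \eqref{order1}) is not the one actually governing the triangularity in this paper---that role is played by $\preceq$ of \eqref{order2}.
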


For $A\in\afThnp$, let $$\ti u_A^\pm=\up^{\dim \End(M(A))-\dim M(A)}u_A^\pm,$$
and let $\tA$ be the transpose matrix of $A$.
The relationship between $\dbfHa$ and $\afbfSr$ can be seen from the following (cf. \cite{GV,Lu99} and \cite[Prop.~7.6]{VV99}).
\begin{Thm}[{\cite[3.6.3, 3.8.1]{DDF}}] \label{zr}
For $r\geq 1$, the map $\zr:\dbfHa\ra \afbfSr$ is a surjective algebra homomorphism such that,
for all $\bfj\in \afmbzn$ and $A\in \afThnp$,
$$\zr(K^\bfj)=0(\bfj,r),\;\zr(\ti u_A^+)=A(\bfl,r),\;\;\text{and}\;\;
\zr(\ti u_A^-)=(\tA)(\bfl,r).$$
\end{Thm}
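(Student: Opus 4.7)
The plan is to prove the Main Theorem by a BLM-type stabilisation argument that transfers the realisation from the family of affine quantum Schur algebras $\afbfSr$ back to $\bfU(\afgl)\cong\dbfHa$ (Theorem~\ref{presentation-dbfHa}(1)) through the surjective homomorphisms $\zr\colon\dbfHa\to\afbfSr$ of Theorem~\ref{zr}. The first task is to exhibit, for every $(A,\bfj)\in\afThnpm\times\afmbzn$, a distinguished element $A(\bfj)\in\dbfHa$ with $\zr(A(\bfj))=A(\bfj,r)$ for all sufficiently large $r$. Guided by Theorem~\ref{zr} and the triangular decomposition $\dbfHa=\dbfHap\otimes\dbfHaz\otimes\dbfHam$, the natural starting candidate is a monomial of the shape $\ti u_{A^+}^+\cdot K^{\bfj'}\cdot\ti u_{({}^t\!A^-)}^-$, where $\bfj'$ encodes $\bfj$ together with the required diagonal shift; this is then corrected by $\preceq$-lower terms under a BLM-type partial order on $\afThnpm$ (graded, say, by $\sigma(\cdot)$ and off-diagonal support) so that the image under each $\zr$ matches $A(\bfj,r)$.

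The core auxiliary tool is the injectivity of the collective map $\Phi:=\prod_{r\ge 0}\zr\colon\dbfHa\to\prod_{r\ge 0}\afbfSr$, i.e.\ the faithfulness of the action of $\dbfHa$ on $\bigoplus_{r\ge 0}\afVn^{\otimes r}$ established in \cite{DF09,DDF}. Given $\Phi$ injective, linear independence of $\{A(\bfj)\}_{A,\bfj}$ in $\dbfHa$ follows at once from that of $\{A(\bfj,r)\}$ in a single sufficiently large $\afbfSr$, while spanning follows by comparing with the PBW basis of Theorem~\ref{presentation-dbfHa}(2) via a triangular change of basis. Any identity in $\dbfHa$---in particular the multiplication formulas of Proposition~\ref{B(bfl,r)A(bfj,r)}(1)--(3)---can then be verified by checking its image under $\zr$ for each $r$, which reduces to the Schur-algebra formulas already in hand via Theorem~\ref{[B][A]}.

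The generation statement is proved by induction on $\preceq$. The formulas~(1)--(3) of Proposition~\ref{B(bfl,r)A(bfj,r)} express each product $S_\al(\bfl)\cdot A(\bfj)$, ${}^t\!S_\al(\bfl)\cdot A(\bfj)$, and $0(\bfj)\cdot A(\bfl)$ as a dominant basis element $A'(\bfj')$ with $A'\succ A$, plus $\preceq$-lower terms; by the inductive hypothesis those lower terms lie in the subalgebra $\mc U$ generated by $\{0(\bfj'),S_\al(\bfl),{}^t\!S_\al(\bfl)\}$, and the leading coefficient is invertible in $\mbq(\up)$, so $A'(\bfj')\in\mc U$. Starting from the base case $A=0$ (where $A(\bfj)=0(\bfj)$ is itself a generator), the induction yields $A(\bfj)\in\mc U$ for every $(A,\bfj)\in\afThnpm\times\afmbzn$.

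The main obstacle is the coherent lifting in Step~1. Although Theorem~\ref{zr} pins down $\zr$ on the PBW generators, the element $A(\bfj,r)$ of~\eqref{A(dt,la,r),A(dt,r)} is a $v$-weighted sum over $\mu\in\afLa(n,r-\sigma(A))$ whose range grows with~$r$, so producing a single $A(\bfj)\in\dbfHa$ that projects onto $A(\bfj,r)$ for every large $r$ is a genuine coherence requirement. I plan to handle this by building $A(\bfj)$ inductively along $\preceq$: at each step the discrepancy between the provisional candidate and the target $A(\bfj,r)$, after subtraction of known $\preceq$-lower contributions, depends on $r$ only through the Gaussian binomials of~\ref{Notaion}, whose polynomial patterns in $v$ (exposed by Theorem~\ref{[B][A]}) allow the discrepancy to be expressed in PBW basis elements with $v$-coefficients independent of $r$, completing the lift.
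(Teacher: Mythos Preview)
Your proposal targets the wrong statement. The theorem you are asked to prove is Theorem~\ref{zr}: the existence of a surjective algebra homomorphism $\zr\colon\dbfHa\to\afbfSr$ with the specified values on $K^{\bfj}$, $\ti u_A^+$, and $\ti u_A^-$. In the paper this result is not proved at all; it is simply quoted from \cite[3.6.3, 3.8.1]{DDF} and used as a black box. What you have sketched instead is an outline of the Main Theorem~\ref{MThm} (equivalently Theorem~\ref{realization}), which is a different statement and is proved later in \S4.

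Worse, your argument is circular relative to the assigned statement: you repeatedly invoke Theorem~\ref{zr} as an input (``through the surjective homomorphisms $\zr$ of Theorem~\ref{zr}'', ``Guided by Theorem~\ref{zr}\ldots'', ``Theorem~\ref{zr} pins down $\zr$ on the PBW generators''). You cannot use the conclusion of the theorem you are meant to establish as a hypothesis. A genuine proof of Theorem~\ref{zr} would have to construct the map $\zr$ (for instance via the $\afbfHr$-module structure on the affine tensor space, or by verifying the defining relations of $\dbfHa$ on the proposed images), check it is an algebra homomorphism, compute its values on the displayed elements, and prove surjectivity---none of which your proposal addresses.
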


\section{Some multiplication formulas in the affine $\up$-Schur algebra}
We now derive certain useful multiplication formulas in the affine $\up$-Schur algebra and, hence, in the quantum affine $\mathfrak{gl}_n$. These formulas will be given in \ref{[B][A]} and \ref{B(bfl,r)A(bfj,r)}. They are the key to the realization of quantum affine $\frak{gl}_n$.

We need some preparation before proving \ref{[B][A]} and  \ref{B(bfl,r)A(bfj,r)}. The following result is given in \cite[3.2.3]{DDF}. 

\begin{Lem}\label{double coset}
Let $\la,\mu\in\afLanr$ and $d\in\msD_{\la,\mu}^\vtg$. Assume
$A=\jmath_\vtg(\la,d,\mu)$. Then $d^{-1}\frak S_\la
d\cap\frak S_\mu=\frak S_\nu$, where
$\nu=(\nu^{(1)},\ldots,\nu^{(n)})$ and
$\nu^{(i)}=(a_{ki})_{k\in\mbz}=(\ldots,a_{1i},\ldots,a_{ni},\ldots)$. In particular, we have
\begin{equation*}
x_\la T_d x_\mu=\displaystyle \prod_{1\leq i\leq n\atop j\in\mbz}\dblr{a_{i,j}}^! T_{\fS_\la d\fS_\mu}.
\end{equation*}
\end{Lem}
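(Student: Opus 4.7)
The plan is to prove the two claims in order: first identify the intersection $d^{-1}\fS_\la d \cap \fS_\mu$ as a concrete Young subgroup, then derive the Hecke algebra identity via a double-coset length-additivity argument.

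For the first claim, the key observation is that $\fS_\la$ (resp.~$\fS_\mu$) is precisely the setwise stabilizer in $\affSr$ of the block partition $\{R_k^\la\}_{k\in\mbz}$ (resp.~$\{R_l^\mu\}_{l\in\mbz}$) of $\mbz$. Conjugating, $d^{-1}\fS_\la d$ stabilizes $\{d^{-1}R_k^\la\}_{k\in\mbz}$, so the intersection $d^{-1}\fS_\la d \cap \fS_\mu$ stabilizes the common refinement whose blocks are $d^{-1}R_k^\la\cap R_l^\mu$, of sizes $|R_k^\la\cap dR_l^\mu|=a_{kl}$ by definition of $\jmath_\vtg$. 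Since elements of $\fS_\mu$ are determined by their action on the fundamental domain $\bigsqcup_{l=1}^n R_l^\mu=\{1,\ldots,r\}$, this intersection is isomorphic to $\prod_{l=1}^n\prod_{k\in\mbz}\fS_{a_{kl}}=\fS_\nu$ after relabeling $l\mapsto i$.

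For the Hecke algebra identity, fix a set $\sD$ of shortest right coset representatives of $\fS_\nu$ in $\fS_\mu$, giving $x_\mu=x_\nu T_\sD$ where $T_\sD=\sum_{y_0\in\sD}T_{y_0}$. Using $d^{-1}\in\afmsD_\mu$, we have $T_dT_{y_1}=T_{dy_1}$ for all $y_1\in\fS_\nu\subseteq\fS_\mu$; writing $dy_1=(dy_1d^{-1})d$ with $dy_1d^{-1}\in d\fS_\nu d^{-1}\subseteq\fS_\la$ and using $d\in\afmsD_\la$, we obtain $T_{dy_1}=T_{dy_1d^{-1}}T_d$, so $T_dx_\nu=x_{d\fS_\nu d^{-1}}T_d$. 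A key subsidiary fact is that conjugation by $d$ is length-preserving on $\fS_\nu$: the chain $\ell(dy_1d^{-1})+\ell(d)=\ell(dy_1)=\ell(d)+\ell(y_1)$ forces $\ell(dy_1d^{-1})=\ell(y_1)$. Combining with the standard identity $x_\la\,x_{\fS''}=\bigl(\sum_{w'\in\fS''}\up^{2\ell(w')}\bigr)x_\la$ for any parabolic subgroup $\fS''\subseteq\fS_\la$, we conclude
$$x_\la T_d x_\mu=x_\la\,x_{d\fS_\nu d^{-1}}\,T_d T_\sD=\Bigl(\sum_{y_1\in\fS_\nu}\up^{2\ell(y_1)}\Bigr)x_\la T_d T_\sD=\Bigl(\prod_{i=1}^n\prod_{k\in\mbz}\dblr{a_{ki}}^!\Bigr)T_{\fS_\la d\fS_\mu},$$
where the final equality uses $x_\la T_d T_\sD=T_{\fS_\la d\fS_\mu}$, which follows from the length additivity $\ell(wdy_0)=\ell(w)+\ell(d)+\ell(y_0)$ for $w\in\fS_\la,\,y_0\in\sD$ together with the bijection $\fS_\la\times\sD\to\fS_\la d\fS_\mu$.

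It remains to reconcile the index range: the product $\prod_{i=1}^n\prod_{k\in\mbz}\dblr{a_{ki}}^!$ equals $\prod_{1\le i\le n,\,j\in\mbz}\dblr{a_{ij}}^!$ because the periodicity $a_{i+n,j+n}=a_{ij}$ makes both index sets fundamental domains for the diagonal $\mbz$-shift on $\mbz^2$. The main technical obstacle is rigorously establishing the length-additive identities in the extended affine Weyl group $\affSr$, particularly $\ell(wdy_0)=\ell(w)+\ell(d)+\ell(y_0)$ when $d=\rho^m v$ may involve the rotation $\rho$; these ultimately reduce to the characterization \eqref{minimal coset representative} of minimal coset representatives, but must be tracked carefully in the extended setting where the usual reduced-word combinatorics is twisted by $\rho$.
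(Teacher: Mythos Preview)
Your argument is correct in substance; note that the paper itself does not prove this lemma but simply cites it from \cite[3.2.3]{DDF}, so there is no ``paper's own proof'' to compare against. Your approach---identifying the intersection via the common refinement of the two block partitions, then deducing the Hecke identity from length additivity on the double coset---is the standard one.

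One point deserves tightening. When you write that the intersection ``is isomorphic to $\prod_{l=1}^n\prod_{k\in\mbz}\fS_{a_{kl}}=\fS_\nu$'', you need \emph{equality} as subgroups of $\fS_r$, not merely an abstract isomorphism. For this it is not enough that the blocks $d^{-1}R_k^\la\cap R_l^\mu$ have sizes $a_{kl}$; you must check that within each $R_l^\mu$ these blocks are \emph{consecutive intervals}, listed in increasing order of $k$, so that the resulting Young subgroup is the standard parabolic $\fS_\nu$ with composition $\nu^{(l)}=(a_{kl})_{k\in\mbz}$. This follows from $d^{-1}\in\afmsD_\mu$: by the characterisation \eqref{minimal coset representative}, $d$ is strictly increasing on each $R_l^\mu$, and since the $R_k^\la$ are themselves intervals ordered by $k$, the preimage sets $\{m\in R_l^\mu: d(m)\in R_k^\la\}$ are consecutive subintervals in the required order. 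Once this is said, the rest of your argument---in particular the length-preserving conjugation $\ell(dy_1d^{-1})=\ell(y_1)$, the identity $x_\la T_{w'}=\up^{2\ell(w')}x_\la$ for $w'\in\fS_\la$, and the bijection $\fS_\la\times(\afmsD_\nu\cap\fS_\mu)\to\fS_\la d\fS_\mu$ with additive lengths---goes through as written, and the length identities you flag as ``technical obstacles'' are exactly the standard double-coset facts for (extended) affine Weyl groups, with the rotation $\rho$ causing no difficulty since $\ell(\rho^m w)=\ell(w)$.
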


Given $A\in\afThnr$ with $A=\jmath_\vtg(\la,w,\mu)$, let $y_A=w$ be the shortest representative of the double coset $\fS_\la w\fS_\mu$.
\begin{Lem}\label{length of elements in Dla} $(1)$
For $\la\in\afLa(2,r)$ and $w\in\afmsD_\la\cap\fS_r$, we have
$\ell(w)=\sum_{1\leq i\leq\la_1}(w^{-1}(i)-i)$.

$(2)$ For any  $A\in\afThnr$, $\ell(y_A)=\displaystyle\sum_{1\leq i\leq n\atop i<k;j>l}a_{ij}a_{kl}.$
\end{Lem}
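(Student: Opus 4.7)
The plan for both parts is direct inversion counting, leveraging the characterization of minimal double-coset representatives given in (2.1). For part (1), apply (2.1) to $w \in \afmsD_\la \cap \fS_r$ with $\la = (\la_1, \la_2)$: $w^{-1}$ is then increasing on each of the two blocks $\{1,\ldots,\la_1\}$ and $\{\la_1+1,\ldots,r\}$, making it a Grassmannian permutation. Counting the inversions of $w^{-1}$ (which can only occur between the two blocks), for each $i \in \{1,\ldots,\la_1\}$ the $i-1$ values in $\{1,\ldots,w^{-1}(i)-1\}$ accounted for by the first block leave $w^{-1}(i) - i$ values for the second, producing that many inversions with first coordinate $i$. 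Using $\ell(w) = \ell(w^{-1})$ and summing yields the claim.

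For part (2), set $d = y_A \in \afmsD_{\la,\mu}$. Applying (2.1) to both $d$ and $d^{-1}$, one sees that $d$ is increasing on each source block $R_l^\mu$ and that $d^{-1}$ is increasing on each target block $R_k^\la$. For each pair $(k,l)$, define $B_{k,l} = d^{-1}(R_k^\la) \cap R_l^\mu$, so $|B_{k,l}| = a_{k,l}$. The monotonicity conditions then force two compatibility properties: within each $R_l^\mu$ the subsets $B_{k,l}$ are arranged in increasing order of $k$, and within each $R_k^\la$ the images $d(B_{k,l})$ are arranged in increasing order of $l$.

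Now apply the affine length formula $\ell(d) = \sum_{u \in \{1,\ldots,r\}} \#\{v > u : d(u) > d(v)\}$. Each $u \in \{1,\ldots,r\}$ lies in some $B_{k,l}$ with $1 \leq l \leq n$, and $v \in B_{k',l'}$ for some $k', l'$. If $l = l'$, monotonicity of $d$ on $R_l^\mu$ excludes inversions; if $l' > l$, then $u < v$ is automatic, and a case analysis on $k$ versus $k'$ shows $d(u) > d(v)$ precisely when $k > k'$ (the case $k = k'$ is killed by monotonicity of $d^{-1}$ on $R_k^\la$, and $k < k'$ gives $d(u) < d(v)$). Each such configuration contributes $a_{k,l}\, a_{k',l'}$ inversions, yielding
$$\ell(d) \;=\; \sum_{1 \leq l \leq n,\; l' > l,\; k > k'} a_{k,l}\, a_{k',l'}.$$
Renaming $(k',l')$ as $(i,j)$ and invoking the periodicity $a_{p+n,q+n} = a_{p,q}$---both sums enumerate the same $\mbz$-orbits of pairs of matrix positions (one upper-right of the other), merely pinned at different coordinates---yields the stated $\sum_{1 \leq i \leq n,\, i<k,\, j>l} a_{ij}\, a_{kl}$. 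I expect the main obstacle to lie in the affine bookkeeping: carefully verifying the monotonicity and block-ordering properties of the $B_{k,l}$, and justifying the final re-pinning step via periodicity so that the formula is written with the row index $i$ (rather than the column index $l$) in the fundamental domain $\{1,\ldots,n\}$.
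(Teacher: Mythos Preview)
Your argument for part (1) is essentially identical to the paper's.

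For part (2) your approach is correct but genuinely different from the paper's. The paper (in its Appendix) first proves the affine inversion formula $\ell(w)=|\mathrm{Inv}(w)|$ from scratch, then \emph{explicitly constructs} a candidate element $\tilde y_A$ via a pseudo-matrix $A^-$ built from partial-sum sequences, verifies that $\tilde y_A$ lies in the correct double coset, and finally exhibits an explicit bijection between the set $\sN=\bigcup (R_i^\la\cap w(R_j^\mu))\times(R_k^\la\cap w(R_l^\mu))$ and $\mathrm{Inv}(\tilde y_A)$, showing along the way that $N\le\ell(w)$ for every $w$ in the double coset (hence $\tilde y_A=y_A$). Your route is more structural: you never write down $y_A$, instead extracting the needed monotonicity of $d$ on $\mu$-blocks and of $d^{-1}$ on $\la$-blocks directly from the characterisation (2.1) of $\afmsD_{\la,\mu}$, and then doing a clean case analysis on the block indices $(k,l)$ versus $(k',l')$ to count inversions. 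This is shorter and avoids the explicit combinatorics of the pseudo-matrix construction; the price is that you take the affine inversion formula as given (which is fair---it is in \cite{Shi86}), whereas the paper is self-contained on this point. Your final periodicity step, rewriting the sum pinned at $1\le l\le n$ as one pinned at $1\le i\le n$, is exactly the fundamental-domain argument you describe and is correct.
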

\begin{proof}Since $w\in\fS_r$, 
$w^{-1}(1)<\cdots<w^{-1}(\la_1)$ and $w^{-1}(\la_1+1)<\cdots<w^{-1}(r)$, it follows that
$$\aligned
\ell(w^{-1})&=|\{(i,j)\mid1\leq i<j\leq r,w^{-1}(i)>w^{-1}(j)\}|\\
&=|\{(i,j)\mid1\leq i\leq \la_1,\la_1+1\leq j\leq r,w^{-1}(i)>w^{-1}(j)\}|.\endaligned$$
On the other hand, for every $1\leq i\leq \la_1$, $w^{-1}(i)-i$ of the numbers $1,2,\ldots, w^{-1}(i)$ must lie in 
$\{w^{-1}(\la_1+1),\ldots,w^{-1}(r)\}$ which contribute $w^{-1}(i)-i$ inversions. Hence,
$\ell(w)=\ell(w^{-1})=(w^{-1}(1)-1)+(w^{-1}(2)-2)+\cdots+(w^{-1}(\la_1)-\la_1)$, proving part (1).

Part (2) is probably known. Since we couldn't find a proof in the literature, a proof is given in the Appendix.
\end{proof}
\begin{Lem}\label{sum}
For $a\geq 0$, $r\geq 1$ and $0\leq t\leq r$ we have
$$\sum_{X\han\{a+1,\cdots,a+r\}\atop |X|=t}v^{2\sum_{x\in X}x}=v^{2at+t(t+1)}\dleb{r\atop t}\drib.$$
\end{Lem}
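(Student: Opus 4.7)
The plan is to reduce to the case $a=0$ by a translation, then establish the resulting $q$-binomial identity by induction on $r$. Writing each $x\in X$ as $x=a+y$ with $y\in\{1,\ldots,r\}$, one has $\sum_{x\in X} x = at+\sum_{y\in Y}y$ where $Y=\{y\mid a+y\in X\}\subseteq\{1,\ldots,r\}$ has $|Y|=t$. The factor $v^{2at}$ pulls outside the sum, so it suffices to prove
\[
f(r,t):=\sum_{Y\subseteq\{1,\ldots,r\},\,|Y|=t} v^{2\sum_{y\in Y} y}=v^{t(t+1)}\dleb{r\atop t}\drib.
\]

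For the induction, the base cases $t=0$ and $t=r$ are immediate: $f(r,0)=1=\dleb{r\atop 0}\drib$, and the unique subset in the case $t=r$ has sum $1+2+\cdots+r$, so $f(r,r)=v^{r(r+1)}$ matches $v^{r(r+1)}\dleb{r\atop r}\drib$. For the induction step, I would split the sum according to whether $r\in Y$: subsets not containing $r$ contribute $f(r-1,t)$, while those containing $r$ contribute $v^{2r}f(r-1,t-1)$. This gives the recursion
\[
f(r,t)=f(r-1,t)+v^{2r}f(r-1,t-1).
\]
Applying the induction hypothesis, the right-hand side equals
\[
v^{t(t+1)}\dleb{r-1\atop t}\drib+v^{2r+t(t-1)}\dleb{r-1\atop t-1}\drib
=v^{t(t+1)}\Bigl(\dleb{r-1\atop t}\drib+v^{2(r-t)}\dleb{r-1\atop t-1}\drib\Bigr).
\]

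It then remains to verify the Pascal-like recursion $\dleb{r\atop t}\drib=\dleb{r-1\atop t}\drib+v^{2(r-t)}\dleb{r-1\atop t-1}\drib$ for the Gaussian binomials defined in Notation 1.1. Factoring out $[r-1]^!/([t]^![r-t]^!)$, this reduces to the scalar identity $[r-t]+v^{2(r-t)}[t]=[r]$, which follows at once from $[m]=(v^{2m}-1)/(v^2-1)$.

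There is no conceptual obstacle here; the only thing to keep straight is the bookkeeping of the exponents of $v$, which is why the identification of the prefactor $v^{t(t+1)}$ comes naturally from the minimum possible value $\sum_{y\in Y}y = 1+2+\cdots+t = t(t+1)/2$.
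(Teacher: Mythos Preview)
Your argument is correct and follows essentially the same route as the paper: induction on $r$, splitting the sum according to whether the largest element lies in the subset, and then invoking the Pascal recursion $\dleb{r\atop t}\drib=\dleb{r-1\atop t}\drib+v^{2(r-t)}\dleb{r-1\atop t-1}\drib$. The only cosmetic differences are that you first reduce to $a=0$ and spell out the Pascal identity, whereas the paper carries general $a$ through the induction and quotes Pascal without proof.
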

\begin{proof}
We proceed by induction on $r$. The case $r=1$ is trivial. Assume now that $r>1$.
Then, by induction hypothesis,
\begin{equation*}
\begin{split}
\sum_{X\han\{a+1,\cdots,a+r\}\atop |X|=t}v^{2\sum_{x\in X}x}&=
\sum_{X\han\{a+1,\cdots,a+r-1\}\atop |X|=t}v^{2\sum_{x\in X}x}+
\sum_{Y\han\{a+1,\cdots,a+r-1\}\atop |Y|=t-1}v^{2(a+r+\sum_{x\in Y}x)}\\
&=v^{2at+t(t+1)}\dleb{r-1\atop t}\drib+v^{2(a+r)}v^{2a(t-1)+t(t-1)}\dleb{r-1\atop t-1}\drib\\
&=v^{2at+t(t+1)}\bigg(\dleb{r-1\atop t}\drib+v^{2(r-t)}\dleb{r-1\atop t-1}\drib\bigg)\\
&=v^{2at+t(t+1)}\dleb{r\atop t}\drib,
\end{split}
\end{equation*}
as desired.
\end{proof}

For $i\in\mbz$ let
$\afbse_i\in\afmbnn$ be such that
\begin{equation*}
(\afbse_i)_j=
\begin{cases}
1&\text{if $j\equiv i \mnmod n$}\\
0&\text{otherwise}.
\end{cases}
\end{equation*}

\begin{Lem}[{\cite[5.2]{Fu}}]\label{vartheta}
Let $\mu\in\afLanr$, $\bt\in\afmbnn$ and assume
$\mu\geq\bt$.

$(1)$
If $\al=\sum_{1\leq i\leq n}(\mu_i-\bt_i)\afbse_{i-1}$, $\de=(\al_0,\bt_1,\al_1,\bt_2,\cdots,\al_{n-1},\bt_n)$ and
$$\msY=\{(Y_0,Y_1,\cdots,Y_{n-1})\mid Y_i\han R_{i+1}^\mu,\,|Y_i|=\al_i,\ for\ 0\leq i\leq n-1\},$$ then there is a bijective map $$g:\afmsD_\de\cap\fS_\mu\ra\msY$$ defined by sending $w$ to $(w^{-1}X_0,w^{-1}X_1,\cdots,w^{-1}X_{n-1})$ where
$X_i=
\{\mu_{0,i}+1,\mu_{0,i}+2,\cdots,\mu_{0,i}+\al_i\},$ with $\mu_{0,i}=\sum_{1\leq s\leq i}\mu_s$ and $\mu_{0,0}=0$.

$(2)$ If $\ga=\mu-\bt$,   $\th=(\bt_1,\ga_1,\bt_2,\ga_2,\cdots,\bt_n,\ga_n)$ and
$$\msY'=\{(Y_1',Y_2',\cdots,Y_{n}')\mid Y_i'\han R_{i}^{\mu},\,|Y_i'|=\ga_i,\ for\ 1\leq i\leq n\},$$ then there is a bijective map $$g':\afmsD_{\th}\cap\fS_{\mu}\ra\msY'$$ defined by sending $w$ to $(w^{-1}X_1',w^{-1}X_2',\cdots,w^{-1}X_{n}')$ where
$X_i'=
\{\mu_{0,i-1}+\beta_i+1,\mu_{0,i-1}+\beta_i+2,\cdots,\mu_{0,i}\}.$
\end{Lem}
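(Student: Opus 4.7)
The plan is to prove Part~(1) in detail; Part~(2) follows by a completely symmetric argument with the roles of the initial and terminal pieces of each $\mu$-block swapped. I would start from the observation that $\delta$ is a refinement of $\mu$: since $\al_{i-1}=\mu_i-\bt_i\ge 0$, each $\mu$-block $R_i^\mu$ of size $\mu_i$ splits cleanly as $X_{i-1}\sqcup(R_i^\mu\setminus X_{i-1})$, where $|X_{i-1}|=\al_{i-1}$ and $|R_i^\mu\setminus X_{i-1}|=\bt_i$. Listing these $2n$ subsets in their natural left-to-right order recovers precisely the $\delta$-blocks, so $\fS_\delta=\prod_{i=0}^{n-1}(\fS_{X_i}\times\fS_{R_{i+1}^\mu\setminus X_i})$ sits inside $\fS_\mu$.

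Next, I would invoke the characterization \eqref{minimal coset representative}: an element $w\in\afSr$ lies in $\afmsD_\delta$ precisely when $w^{-1}$ is increasing on every $\delta$-block. For $w$ additionally in $\fS_\mu$, the permutation $w^{-1}$ stabilizes each $\mu$-block, so on $R_{i+1}^\mu$ it restricts to an increasing bijection $X_i\to Y_i:=w^{-1}X_i$ together with an increasing bijection $R_{i+1}^\mu\setminus X_i\to R_{i+1}^\mu\setminus Y_i$; in particular $Y_i\subseteq R_{i+1}^\mu$ has size $\al_i$, so $g(w)\in\msY$. Conversely, any tuple $(Y_0,\ldots,Y_{n-1})\in\msY$ determines a unique $w\in\fS_\mu$ by declaring $w^{-1}|_{R_{i+1}^\mu}$ to be the permutation of $R_{i+1}^\mu$ sending $X_i$ onto $Y_i$ increasingly and $R_{i+1}^\mu\setminus X_i$ onto $R_{i+1}^\mu\setminus Y_i$ increasingly; this $w^{-1}$ is by construction increasing on every $\delta$-block, so $w\in\afmsD_\delta\cap\fS_\mu$. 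This yields the bijection $g$.

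Part~(2) follows by the same reasoning with the refinement reorganized: each $R_i^\mu$ now decomposes as $(R_i^\mu\setminus X_i')\sqcup X_i'$ with $\bt_i$ elements at the front and $\ga_i$ at the tail, and $g'$ records the tail-images $w^{-1}X_i'$. The main obstacle I anticipate is purely notational bookkeeping of the shifted indices defining $\delta$ versus $\theta$ and the corresponding subsets $X_i,X_i'$; the affine setting itself causes no real difficulty because $w\in\fS_\mu$ is a finite permutation inside $\fS_r\subseteq\afSr$, on which the affine length agrees with the usual length, so that \eqref{minimal coset representative} applies verbatim.
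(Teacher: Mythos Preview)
Your proposal is correct and follows essentially the same approach as the paper's own explanation: both observe that $\delta$ (respectively $\theta$) is a refinement of $\mu$ with $X_i$ the initial $\al_i$ elements of $R_{i+1}^\mu$ (respectively $X_i'$ the terminal $\ga_i$ elements of $R_i^\mu$), deduce well-definedness and injectivity from the characterisation \eqref{minimal coset representative}, and construct the inverse by sending $X_i$ onto $Y_i$ and the complement onto the complement, each in increasing order. Your write-up is in fact more complete than the paper's brief sketch, which only records the key observations without spelling out why the inverse lands in $\afmsD_\delta\cap\fS_\mu$.
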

The injection can be seen easily by noting that
$$(\al_0+\bt_1,\al_1+\bt_2,\cdots,\al_{n-1}+\bt_n)=(\mu_1,\mu_2,\cdots,\mu_n)=(\bt_1+\ga_1,\bt_2+\ga_2,\cdots,\bt_n+\ga_n)$$
and $X_i$ (reps., $X_{i+1}'$) consists of the first $\al_i$ (reps., the last $\ga_{i+1}$) numbers in $R_{i+1}^\mu$ for all $0\leq i<n$, while the subjection is to define $w=g^{-1}(Y_0,Y_1,\ldots,Y_{n-1})$ by $w^{-1}(\mu_{0,i}+s)=k_{i,s}$
for all $0\leq i\leq n-1$ and $1\leq s\leq \mu_{i+1}$, where $Y_i=\{k_{i,1},\ldots,k_{i,\al_i}\}$, $R_{i+1}^\mu\backslash Y_i=\{k_{i,\al_i+1},k_{i,\al_i+2},\ldots,k_{i,\mu_{i+1}}\}$, and both are strictly increasing.

For $A\in\afMnz$ with $\sg(A)=r$, we denote $e_A=[A]=0\in\afSr$ if $a_{i,j}<0$ for some $i,j\in\mbz$.
There is a natural map
\begin{equation}\label{ti}
\ti\ :\afThn\ra\afThn\;\;A=(a_{i,j})\longmapsto\ti A=(\ti a_{i,j}),
\end{equation}
where $\ti a_{i,j}=a_{i-1,j}$ for all $i,j\in\mbz$.

We are now ready to establish multiplication formulas of an arbitrary basis elements $e_A$ by certain basis elements $e_B$ in the affine Schur algebra $\afSr$ over $\sZ$, where $B^+$ or ${}^t(B^-)$ defines a semisimple representation of the cyclic quiver. The significance of these formulas is the generalisation of \cite[3.5]{Lu99} (cf. \cite[3.1]{BLM}) from real roots to all roots including all imaginary roots.

\begin{Prop}\label{eBeA}
Let  $A\in\afThnr$ and $\mu=\ro(A)$.
Assume $\bt\in\afmbnn$ and $\bt\leq\mu$.
Let $\al=\sum_{1\leq i\leq n}(\mu_i-\bt_i)\afbse_{i-1}$ and $\ga=\sum_{1\leq i\leq n}(\mu_i-\bt_i)\afbse_{i}=\mu-\bt$, $B=\sum_{1\leq i\leq n}\al_i\afE_{i,i+1}+\diag(\beta)$, and $C=\sum_{1\leq i\leq n}\ga_i\afE_{i+1,i}+\diag(\beta)$. Then the
following identities hold in $\afSr$.
\begin{itemize}
\item[(1)]
$e_Be_A=\sum\limits_{T\in\afThn\atop\ro(T)=\al}v^{2\sum_{1\leq i\leq n,\,j>l} (a_{i,j}-t_{i-1,j})t_{i,l}}\prod\limits_{1\leq i\leq n\atop j\in\mbz}\dleb{a_{i,j}+t_{i,j}-t_{i-1,j}\atop t_{i,j}}\drib e_{A+T-\ti T};$
\item[(2)]
$e_Ce_A
=\sum\limits_{T\in\afThn\atop\ro(T)=\ga}
v^{2\sum_{1\leq i\leq n,\,j<l}(a_{i,j}-t_{i,j})t_{i-1,l}}\prod\limits_{1\leq i\leq n\atop j\in\mbz}\dleb{a_{i,j}-t_{i,j}+t_{i-1,j}\atop t_{i-1,j}}\drib e_{A-T+\ti T}.$
\end{itemize}
\end{Prop}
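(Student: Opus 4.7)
The plan is to establish formula (1) by computing both sides of the equation applied to $x_\nu$ with $\nu=\co(A)$, and then matching in the Hecke algebra; formula (2) will follow by the dual argument. Since the elements $\{e_M\}$ with $\ro(M)=\lambda_B:=\alpha+\beta$ and $\co(M)=\nu$ form part of a basis for $\Hom_{\afHr}(x_\nu\afHr, x_{\lambda_B}\afHr)$, it is enough to verify the identity of images after acting on $x_\nu$ and then to compare coefficients of the $T_{\fS_{\lambda_B}y\fS_\nu}$.

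First I would use $e_A(x_\nu)=T_{\fS_\mu d_A\fS_\nu}$ and the formula $T_{\fS_\lambda d\fS_\mu}=(\prod_{i,j}\dblr{a_{i,j}}^!)^{-1}\,x_\lambda T_d x_\mu$ from Lemma \ref{double coset} to rewrite
$$
(e_Be_A)(x_\nu)=\frac{x_{\lambda_B}T_{d_B}x_\mu T_{d_A}x_\nu}{\prod_k\dblr{\beta_k}^!\,\prod_k\dblr{\alpha_k}^!\,\prod_{i,j}\dblr{a_{i,j}}^!}.
$$
Because $B$ has the bidiagonal form with diagonal entries $\beta_i$ and super\nobreakdash-diagonal entries $\alpha_i=\mu_{i+1}-\beta_{i+1}$, the double coset $\fS_{\lambda_B}d_B\fS_\mu$ is controlled precisely by the choice, inside each block $R_i^\mu$, of which $\alpha_{i-1}$ elements are ``lifted'' to row $i-1$ of the product and which $\beta_i$ remain in row $i$.

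The combinatorial core is then Lemma \ref{vartheta}(1): each matrix $T\in\afThn$ with $\ro(T)=\alpha$ corresponds bijectively to a tuple $(Y_0,\ldots,Y_{n-1})$ with $Y_i\subseteq R_{i+1}^\mu$, $|Y_i|=\alpha_i$; one further refines the choice of $Y_i$ column-by-column through $d_A$, taking exactly $t_{i,j}$ elements from the portion $w R_j^\nu\cap R_{i+1}^\mu$ of size $a_{i+1,j}$, and this refinement is governed by the Gaussian binomial $\dleb{a_{i,j}+t_{i,j}-t_{i-1,j}\atop t_{i,j}}\drib=\dleb{M_{i,j}\atop t_{i,j}}\drib$ where $M=A+T-\tilde T$. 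The row/column-sum check $\ro(M)=\lambda_B$ and $\co(M)=\nu$ then forces the contribution to be a multiple of $e_M(x_\nu)$, so that each admissible $T$ accounts for exactly one summand on the right-hand side.

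The main obstacle is the careful bookkeeping of the $v$-power. One must compare $\ell(y_M)$ with $\ell(y_A)$ using Lemma \ref{length of elements in Dla}(2): transferring $t_{i,j}$ elements out of row $i+1$ column $j$ into row $i$ column $j$ of $A$ creates, for each $l<j$, exactly $(a_{i,l}-t_{i-1,l})t_{i,j}$ new inversions from the remaining entries in row $i$, and no inversions with the entries below row $i+1$. Summing these contributions produces the exponent $2\sum_{j>l}(a_{i,j}-t_{i-1,j})t_{i,l}$, with Lemma \ref{sum} used to collect the subset-sum powers into the stated form. Formula (2) is proved by the same scheme with the roles of adjacent rows swapped: one uses Lemma \ref{vartheta}(2) and the sub-diagonal structure of $C$ so that $T$ parametrises lifts in the opposite direction, giving $M=A-T+\tilde T$ and the dual length count.
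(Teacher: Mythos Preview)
Your outline follows the paper's proof closely: evaluate at $x_\nu$, expand via Lemma~\ref{double coset}, parametrise the minimal coset representatives in $\afmsD_\delta\cap\fS_\mu$ by Lemma~\ref{vartheta}(1), sum subsets with Lemma~\ref{sum}, and control lengths with Lemma~\ref{length of elements in Dla}(2). However, two pieces of your bookkeeping are mis-stated and would not verify as written.

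First, the subset refinement (choosing $t_{i,j}$ elements from the block $R_{i+1}^\mu\cap d_A R_j^\nu$ of size $a_{i+1,j}$) contributes, via Lemma~\ref{sum}, the factor $\dleb{a_{i+1,j}\atop t_{i,j}}\drib$, equivalently $\dleb{a_{i,j}\atop t_{i-1,j}}\drib$ after reindexing---\emph{not} $\dleb{M_{i,j}\atop t_{i,j}}\drib$. The latter only emerges after multiplying by the factorial ratio $\prod_{i,j}\dblr{M_{i,j}}^!/\dblr{a_{i,j}}^!$ that comes from the second application of Lemma~\ref{double coset} (re-expressing $T_{\fS_{\lambda_B}}T_{y_M}T_{\fS_\nu}$ as a multiple of $e_M(x_\nu)$), and then invoking the periodicity $\prod_{i,j}\dblr{t_{i,j}}^!=\prod_{i,j}\dblr{t_{i-1,j}}^!$.

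Second, the exponent $\sum_{j>l}(a_{i,j}-t_{i-1,j})t_{i,l}$ is not the length difference $\ell(y_A)-\ell(y_M)$ alone; it equals $a_T+\ell(y_A)-\ell(y_M)$, where $a_T$ is the residual power of $\up^2$ left over from Lemma~\ref{sum} after extracting the Gaussian binomials. Both contributions must be computed separately and then added.

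Finally, you omit the technical point that makes the Hecke-algebra manipulation tractable at all: the shortest representative $d_B$ for the bidiagonal $B$ is $\rho^{-\alpha_0}$, hence has length zero, which guarantees $\ell(d_B w d_A)=\ell(w)+\ell(d_A)$ for every $w\in\afmsD_\delta\cap\fS_\mu$ and so $T_{d_B}T_wT_{d_A}=T_{d_Bwd_A}$ without correction terms.
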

\begin{proof}
We only prove (1). The proof for (2) is entirely similar.

Let  $\la=\ro(B)$ and $\nu=\co(A)$. Assume $d_1\in\msD^\vtg_{\la,\mu}$ and $d_2\in\msD^\vtg_{\mu,\nu}$ defined by
$\jmath_\vtg(\la, d_1,\mu)=B$ and $\jmath_\vtg(\mu, d_2,\nu)=A$. Then $\la_i=\al_i+\bt_i$ and $\mu_i=\al_{i-1}+\bt_i$ for all $1\leq i\leq n$.
From  \ref{double coset} we see that
\begin{equation*}
\begin{split}
e_Be_A(x_\nu)&=T_{\frak S_\la d_1\frak S_\mu}\cdot T_{d_2}\cdot T_{\afmsD_\og\cap\frak S_\nu}\\
&=\frac{1}{\sum_{w\in\frak S_\mu}v^{2\ell(w)}}T_{\frak S_\la d_1\frak
S_\mu}\cdot T_{\frak S_\mu d_2\frak S_\nu}\\
&=\frac{1}{\sum_{w\in\frak S_\mu}v^{2\ell(w)}}\prod_{1\leq i\leq n\atop
j\in\mbz}\frac{1}{\dblr{a_{i,j}}^!}T_{\frak S_\la d_1\frak
S_\mu}\cdot T_{\frak S_\mu}\cdot T_{d_2}\cdot T_{\frak S_\nu}\\
&=\prod_{1\leq i\leq n\atop j\in\mbz}\frac{1}{\dblr{a_{i,j}}^!}T_{\frak
S_\la d_1\frak S_\mu}\cdot T_{d_2}\cdot T_{\frak S_\nu}\\
&=\prod_{1\leq i\leq n\atop j\in\mbz}\frac{1}{\dblr{a_{i,j}}^!}T_{\frak
S_\la}\cdot T_{d_1}\cdot T_{\afmsD_\de\cap\frak S_\mu}\cdot
T_{d_2}\cdot T_{\frak S_\nu}
\end{split}
\end{equation*}
where $\frak S_\og=d_2^{-1}\frak S_\mu d_2\cap\frak S_\nu$,
$\frak S_\de=d_1^{-1}\frak S_\la d_1\cap\fS_\mu$ with $\de=(\al_0,\bt_1,\al_1,\bt_2,\cdots,\al_{n-1},\bt_n)$.
By \eqref{minimal coset representative}, we have
$d_1=\rho^{-\al_0}$ (so $\ell(d_1)=0$). This together with the fact that $d_2\in\afmsD_\mu$ implies that $\ell(d_1wd_2)=\ell(d_1)+\ell(w)+\ell(d_2)=\ell(w)+\ell(d_2)$ for $w\in\afmsD_\de\cap\frak S_\mu$. Thus, we have
\begin{equation}\label{eq1 for fundentemental formulas}
e_Be_A(x_\nu)=\prod_{1\leq i\leq n\atop j\in\mbz}\frac{1}{\dblr{a_{i,j}}^!}\sum_{w\in\fS_\mu\cap\afmsD_\de}T_{\frak
S_\la}T_{d_1wd_2}T_{\frak S_\nu}
\end{equation}

For $w\in\afmsD_\de\cap\frak S_\mu$ let
$C^{(w)}=(c_{i,j}^{(w)})\in\afThnr$, where
$c_{i,j}^{(w)}=|R_i^\la\cap d_1wd_2R_j^\nu|$, and let $T^{(w)}=(t_{i,j}^{(w)})\in\afThn$,
where $t_{i,j}^{(w)}=|w^{-1}X_i\cap d_2 R_j^\nu|$ with
$X_i=
\{\mu_{0,i}+1,\mu_{0,i}+2,\cdots,\mu_{0,i}+\al_i\}$ for $1\leq i\leq n$ and $j\in\mbz$. Then $\ro(T^{(w)})=\alpha$ and 
$\co(T^{(w)})\leq \nu$.
Since $d_1^{-1}R_i^\la=\al_0+R_i^\la=(R_i^\mu\backslash X_{i-1}\cup X_i)$, we see that 
$c_{i,j}^{(w)}=|R_i^\la\cap d_1wd_2R_j^\nu|=|w^{-1}d_1^{-1}R_i^\la\cap d_2 R_j^\nu|=a_{i,j}-t_{i-1,j}^{(w)}+t_{i,j}^{(w)}$ (see the proof of \cite[5.3]{Fu}). In other words, for  all $w\in\afmsD_\de\cap\fS_\mu$,
\begin{equation}\label{eq2 for fundentemental formulas}
C^{(w)}=A+T^{(w)}-\ti T^{(w)}.
\end{equation}
In particular, $y_{C^{(w)}}\in \fS_\la d_1wd_2\fS_\nu\cap \afmsD_{\la\nu}$.

Putting $\fS_{\al_w}=y_{C^{(w)}}^{-1}\fS_\la y_{C^{(w)}}\cap\fS_\nu$,
 we have by \ref{double coset}
\[
\begin{split}
\sum_{w\in\fS_\mu\cap\afmsD_\de}T_{\frak
S_\la}T_{d_1wd_2}T_{\frak S_\nu}
&=\sum_{w\in\fS_\mu\cap\afmsD_\de,\,d_1wd_2=w'y_{C^{(w)}}w''\atop w'\in\fS_\la,\,w''\in\fS_\nu\cap\afmsD_{\al_w}}T_{\frak
S_\la}
T_{w'}T_{y_{C^{(w)}}}T_{w''}T_{\frak S_\nu}\\
&=\sum_{w\in\fS_\mu\cap\afmsD_\de,\,d_1wd_2=w'y_{C^{(w)}}w''\atop w'\in\fS_\la,\,w''\in\fS_\nu\cap\afmsD_{\al_w}}v^{2(\ell(w')+\ell(w''))}T_{\frak
S_\la} T_{y_{C^{(w)}}} T_{\frak S_\nu}\\
&=\sum_{w\in\fS_\mu\cap\afmsD_\de}v^{2(\ell(w)+\ell(d_2)-\ell(y_{C^{(w)}}))}\prod_{1\leq i\leq n,\,j\in\mbz}\dleb c_{i,j}^{(w)}\drib^! e_{C^{(w)}}(x_\nu).
\end{split}
\]

Now by \eqref{eq1 for fundentemental formulas} and \eqref{eq2 for fundentemental formulas} and noting $\ro(T^{(w)})=\al$ for   $w\in\afmsD_\de\cap\fS_\mu$,
we have
\begin{equation}\label{eq3 for fundentemental formulas}
\begin{split}
e_Be_A&=\sum_{w\in\fS_\mu\cap\afmsD_\de}v^{2(\ell(w)+\ell(d_2)-\ell(y_{C^{(w)}}))}
\prod_{1\leq i\leq n,\,j\in\mbz}\frac{\dleb c_{i,j}^{(w)}\drib^!}{\dblr{a_{i,j}}^!} e_{C^{(w)}}
\\
&=\sum_{T\in\afThn\atop\ro(T)=\al}\prod_{1\leq i\leq n\atop j\in\mbz}
\frac{\dblr{a_{i,j}-t_{i-1,j}+t_{i,j}}^!}{\dblr{a_{i,j}}^!}v^{2(\ell(d_2)-\ell(y_{A+T-\ti T}))}\bigg(\sum_{w\in\fS_\mu\cap\afmsD_\de\atop T^{(w)}=T}v^{2\ell(w)}\bigg)e_{A+T-\ti T}.
\end{split}
\end{equation}
Given $T\in\afThn$ with $\ro(T)=\al$ let
$$\msZ(T)=\{Z=(Z_{i,j})_{0\leq i\leq n-1,\,j\in\mbz}\mid |Z_{i,j}|=t_{i,j},\,Z_{i,j}\han R_{i+1}^\mu\cap d_2R_j^\nu,\ \text{for}\ 0\leq i\leq n-1,\,j\in\mbz\}.$$
If $T=T^{(w)}$ then the bijective map $g$ in \ref{vartheta} induces a  bijective map $$h_T: \{w\in\afmsD_\de\cap\fS_\mu\mid T^{(w)}=T\}\ra\msZ(T)$$ defined by sending $w$ to
$(w^{-1}(X_i)\cap d_2R_j^\nu)_{0\leq i\leq n-1,\,j\in\mbz}$.
Since for $0\leq i\leq n-1$ and $j\in\mbz$
$$ R_{i+1}^\mu\cap d_2R_j^\nu=\bigg\{\mu_{0,i}+\sum_{s\leq j-1}a_{i+1,s}+1,\mu_{0,i}+\sum_{s\leq j-1}a_{i+1,s}+2,\cdots,\mu_{0,i}+\sum_{s\leq j}a_{i+1,s}\bigg\},$$
it follows from \ref{length of elements in Dla} and the definition of $g^{-1}$ that, for $Z=(Z_{i,j})_{0\leq i\leq n-1,\,j\in\mbz}\in\msZ(T)$,
\begin{equation*}
\begin{split}
\ell(h_T^{-1}(Z))
&=\sum_{0\leq i\leq n-1}\bigg(\sum_{k\in Z_{i,j}\atop j\in\mbz}k
-\sum_{1\leq j\leq\al_i}(\mu_{0,i}+j)\bigg)\\
&=\sum_{0\leq i\leq n-1\atop j\in\mbz,\,k\in Z_{i,j}}k-\sum_{0\leq i\leq n-1}\bigg(\al_i\mu_{0,i}+\frac{\al_i(\al_i+1)}{2}\bigg).\\
\end{split}
\end{equation*}
This implies that
\begin{equation*}
\begin{split}
\sum_{w\in\fS_\mu\cap\afmsD_\de\atop T^{(w)}=T}v^{2\ell(w)}&=
\sum_{Z\in\msZ(T)}v^{2\ell(\kappa_T^{-1}(Z))}\\
&=v^{-2\sum_{0\leq i\leq n-1}(\al_i\mu_{0,i}+\al_i(\al_i+1)/2)}\prod_{0\leq i\leq n-1\atop j\in\mbz}\bigg(\sum_{Z_{i,j}\han R_{i+1}^\mu\cap d_2R_j^\nu\atop |Z_{i,j}|=t_{i,j}}v^{2\sum_{k\in Z_{i,j}}k}\bigg).
\end{split}
\end{equation*}
Consequently, by \ref{sum},
we have
\begin{equation}\label{eq4 for fundentemental formulas}
\begin{split}
\sum_{w\in\fS_\mu\cap\afmsD_\de\atop T^{(w)}=T}v^{2\ell(w)}
&=v^{2a_T}\prod_{0\leq i\leq n-1\atop j\in\mbz}\dleb{a_{i+1,j}\atop t_{i,j}}\drib=v^{2a_T}\prod_{1\leq i\leq n\atop j\in\mbz}\dleb{a_{i,j}\atop t_{i-1,j}}\drib
\end{split}
\end{equation}
where $$a_T=\sum_{0\leq i\leq n-1\atop j\in\mbz}\bigg(t_{i,j}\big(\mu_{0,i}
+\sum_{s\leq j-1}a_{i+1,s}\big)+\frac{t_{i,j}(t_{i,j}+1)}{2}\bigg)-\sum
_{0\leq i\leq n-1}\bigg(\al_i\mu_{0,i}+\frac{\al_i(\al_i+1)}{2}\bigg).$$
Since $\ro(T)=\al$ we have $\al_i=\sum_{j\in\mbz}t_{i,j}$ and
$\al_i^2=\sum_{j\in\mbz}t_{i,j}^2+2\sum_{j<l}t_{i,j}t_{i,l}$ for $0\leq i\leq n-1$. This implies that
$$a_T=\sum_{0\leq i\leq n-1\atop j,s\in\mbz,\, s<j}a_{i+1,s}t_{i,j}-\sum_{0\leq i\leq n-1\atop j,l\in\mbz,\,j<l}t_{i,j}t_{i,l}.$$
Since $d_2=y_A$ is the shortest representative in the double coset associated with $A$, by
\ref{length of elements in Dla}(2),
$$\ell(d_2)-\ell(y_{A+T-\ti T})=
\sum_{1\leq i\leq n\atop j>l}a_{i,j}t_{i,l}
-\sum_{1\leq i\leq n\atop j>l}a_{i+1,l}t_{i,j}+\sum_{1\leq i\leq n\atop j>l}
t_{i,j}t_{i,l}-\sum_{1\leq i\leq n\atop j>l}t_{i-1,j}t_{i,l}.$$
It follows that
$$a_T+\ell(d_2)-\ell(y_{A+T-\ti T})=\sum_{1\leq i\leq n\atop j>l}a_{i,j}t_{i,l}
-\sum_{1\leq i\leq n\atop j>l}t_{i-1,j}t_{i,l}.$$
Consequently, by \eqref{eq3 for fundentemental formulas}, \eqref{eq4 for fundentemental formulas} and noting $\prod_{1\leq i\leq n,\,j\in\mbz}\dblr{ t_{i,j}}^!=\prod_{1\leq i\leq n,\,j\in\mbz}\dblr{t_{i-1,j}}^!$ we have
\begin{equation*}
\begin{split}
e_Be_A
&=\sum_{T\in\afThn\atop\ro(T)=\al}v^{2(a_T+\ell(d_2)-\ell(y_{A+T-\ti T}))}\prod_{1\leq i\leq n\atop j\in\mbz}
\frac{\dblr{a_{i,j}-t_{i-1,j}+t_{i,j}}^!}{\dblr{t_{i-1,j}}^!\cdot \dblr{a_{i,j}-t_{i-1,j}}^!}e_{A+T-\ti T}\\
&=\sum\limits_{T\in\afThn\atop\ro(T)=\al}v^{2\sum_{1\leq i\leq n,\,j>l} (a_{i,j}-t_{i-1,j})t_{i,l}}\prod\limits_{1\leq i\leq n\atop j\in\mbz}\dleb{a_{i,j}+t_{i,j}-t_{i-1,j}\atop t_{i,j}}\drib e_{A+T-\ti T},
\end{split}
\end{equation*}
proving (1).
\end{proof}

Let $\bar\ :\sZ\ra\sZ$ be the ring homomorphism defined by $\bar v=v^{-1}$. We now use \ref{eBeA}  to derive the corresponding formulas for the normalised basis $\{[A]\}_{A\in\afThnr}$ defined in \eqref{nbasis}.

\begin{Prop} \label{[B][A]}
Let  $A\in\afThnr$ and $\al,\ga\in\afmbnn$.

$(1)$ For $B\in\afThnr$, if $B-\sum\limits_{1\leq i\leq n}\al_i\afE_{i,i+1}$ is a diagonal matrix and $\co(B)=\ro(A)$, then 
$$[B][A]=\sum_{T\in\afThn\atop\ro(T)=\al}v^{\bt(T,A)}\prod_{1\leq i\leq n\atop j\in\mbz}\ol{\dleb{a_{i,j}+t_{i,j}-t_{i-1,j}\atop t_{i,j}}\drib}[A+T-\ti T],$$
where $\bt(T,A)=\sum_{1\leq i\leq n,\,j\geq l}(a_{i,j}-t_{i-1,j})t_{i,l}-\sum_{1\leq i\leq n,\,j>l}(a_{i+1,j}-t_{i,j})t_{i,l}$.

$(2)$ For $C\in\afThnr$, if $C-\sum_{1\leq i\leq n}\ga_i\afE_{i+1,i}$ is a diagonal matrix and $\co(C)=\ro(A)$, then
$$[C][A]=\sum_{T\in\afThn\atop\ro(T)=\ga}v^{\bt'(T,A)}\prod_{1\leq i\leq n\atop j\in\mbz}\ol{\dleb{a_{i,j}-t_{i,j}+t_{i-1,j}\atop t_{i-1,j}}\drib}[A-T+\ti T],$$
where $\bt'(T,A)=\sum_{1\leq i\leq n,\,l\geq j}(a_{i,j}-t_{i,j})t_{i-1,l}-\sum_{1\leq i\leq n,\,l>j}(a_{i,j}-t_{i,j})t_{i,l}$.
\end{Prop}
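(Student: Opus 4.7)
The plan is to deduce Proposition \ref{[B][A]} from Proposition \ref{eBeA} by translating between the bases $\{e_A\}$ and $\{[A]\}$ via the relation $[X]=v^{-d_X}e_X$, and then invoking the behaviour of Gaussian polynomials under the bar involution. I will work out part (1) in detail; part (2) will follow from the entirely analogous calculation starting from \ref{eBeA}(2), with the roles of $T$ and $\ti T$ interchanged and the matrix $M=A-T+\ti T$ in place of $A+T-\ti T$.

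For part (1), substituting $e_B=v^{d_B}[B]$, $e_A=v^{d_A}[A]$ and $e_{A+T-\ti T}=v^{d_{A+T-\ti T}}[A+T-\ti T]$ into \ref{eBeA}(1) yields
\begin{equation*}
[B][A]=\sum_{T:\ro(T)=\al}v^{E(T,A,B)}\prod_{1\leq i\leq n,\,j\in\mbz}\dleb{a_{i,j}+t_{i,j}-t_{i-1,j}\atop t_{i,j}}\drib[A+T-\ti T],
\end{equation*}
where
\begin{equation*}
E(T,A,B)=d_{A+T-\ti T}-d_A-d_B+2\sum_{1\leq i\leq n,\,j>l}(a_{i,j}-t_{i-1,j})t_{i,l}.
\end{equation*}
Next, I will record the bar-identity $\overline{[\![m]\!]^!}=v^{-m(m-1)}[\![m]\!]^!$, from which the standard consequence
\begin{equation*}
\dleb{N\atop t}\drib=v^{2t(N-t)}\overline{\dleb{N\atop t}\drib}
\end{equation*}
follows. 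Taking $N=a_{i,j}+t_{i,j}-t_{i-1,j}$ and $t=t_{i,j}$ so that $N-t=a_{i,j}-t_{i-1,j}$, each Gaussian coefficient above can be replaced by its bar at the cost of the additional factor $v^{2\sum_{i,j}t_{i,j}(a_{i,j}-t_{i-1,j})}$, which merges into the $v$-power in front.

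The remaining task, and the main computational obstacle, is to verify that the combined exponent equals $\bt(T,A)$, i.e., to establish the identity
\begin{equation*}
d_{A+T-\ti T}-d_A-d_B+2\sum_{1\leq i\leq n,\,j>l}(a_{i,j}-t_{i-1,j})t_{i,l}+2\sum_{1\leq i\leq n,\,j\in\mbz}t_{i,j}(a_{i,j}-t_{i-1,j})=\bt(T,A).
\end{equation*}
I will approach this by expanding $d_{A+T-\ti T}-d_A$ via the definition $d_M=\sum_{1\leq i\leq n,\,i\geq k,\,j<l}m_{i,j}m_{k,l}$ and writing $m_{i,j}=a_{i,j}+(t_{i,j}-t_{i-1,j})$; this splits $d_{A+T-\ti T}-d_A$ into linear cross terms in the $a$'s and $t$'s plus a purely quadratic piece in the $t$'s. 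Simultaneously, $d_B$ can be computed explicitly from the fact that $B$ is supported on its diagonal and super-diagonal with $b_{i,i+1}=\al_i=\sum_{j}t_{i,j}$ and $b_{i,i}=\bt_i$, giving a closed sum involving only the row sums of $T$. The index shifts produced by the $\ti T$ contribution, together with the periodicity $a_{i+n,j+n}=a_{i,j}$, $t_{i+n,j+n}=t_{i,j}$, should then collapse the quadratic pieces and match the two oppositely-signed sums in $\bt(T,A)$; conceptually, the minus sign in $\bt(T,A)$ reflects the length contribution coming from the index shift $a_{i+1,j}$ produced by $\ti T$, which is precisely the effect seen in the length formula \ref{length of elements in Dla}(2) applied to the product matrix $A+T-\ti T$.
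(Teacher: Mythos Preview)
Your approach is essentially identical to the paper's: both start from Proposition~\ref{eBeA}, pass to the normalised basis via $[X]=v^{-d_X}e_X$, apply the bar identity $\dleb{N\atop t}\drib=v^{2t(N-t)}\overline{\dleb{N\atop t}\drib}$, and then reduce to the exponent identity you display. The paper also computes $d_{A+T-\ti T}-d_A$ by expanding with $m_{i,j}=a_{i,j}+(t_{i,j}-t_{i-1,j})$ and uses $d_B=\sum_{1\leq i\leq n}b_{i,i}\al_i$ with $b_{i,i}=\sum_{j}(a_{i,j}-t_{i-1,j})$ (from $\co(B)=\ro(A)$) and $\al_i=\sum_l t_{i,l}$, arriving at
\[
d_{A+T-\ti T}-d_A-d_B=-\sum_{1\leq i\leq n,\,j\geq l}(a_{i,j}-t_{i-1,j})t_{i,l}-\sum_{1\leq i\leq n,\,j>l}(a_{i+1,j}-t_{i,j})t_{i,l},
\]
which combines with your $2\sum_{j\geq l}(a_{i,j}-t_{i-1,j})t_{i,l}$ to give $\bt(T,A)$; you have sketched exactly this computation without carrying it through, so to complete the proof you should perform this last cancellation explicitly rather than leave it at ``should then collapse''.
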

\begin{proof}
We only prove (1). The proof for (2) is entirely similar. Note that
we have
$$\prod_{1\leq i\leq n\atop j\in\mbz}{\dleb{a_{i,j}+t_{i,j}-t_{i-1,j}\atop t_{i,j}}\drib}=v^{2\sum_{1\leq i\leq n,\, j\in\mbz}(a_{i,j}-t_{{i-1,j}})t_{i,j}}\prod_{1\leq i\leq n\atop j\in\mbz}\ol{\dleb{a_{i,j}+t_{i,j}-t_{i-1,j}\atop t_{i,j}}\drib}$$
Thus by \ref{eBeA}(1) we have
$$[B][A]=\sum_{T\in\afThn\atop\ro(T)=\al}v^{\bt(T,A)}\prod_{1\leq i\leq n\atop j\in\mbz}\ol{\dleb{a_{i,j}+t_{i,j}-t_{i-1,j}\atop t_{i,j}}\drib}[A+T-\ti T],$$
where
\begin{equation*}
\begin{split}
\bt(T,A)&=2\sum_{1\leq i\leq n,\,j>l} (a_{i,j}-t_{i-1,j})t_{i,l}+d_{A+T-\ti T}-d_A-d_B+2\sum_{1\leq i\leq n\atop j\in\mbz}(a_{i,j}-t_{{i-1,j}})t_{i,j}\\
&=2\sum_{1\leq i\leq n,\,j\geq l} (a_{i,j}-t_{i-1,j})t_{i,l}+d_{A+T-\ti T}-d_A-d_B.
\end{split}
\end{equation*}
Fix $T\in\afThn$ satisfying $\ro(T)=\al$. Then by definition we have $d_B=\sum_{1\leq i\leq n}b_{i,i}\al_i$ and
\begin{equation*}
\begin{split}
&\qquad d_{A+T-\ti T}-d_A\\
&=
\sum_{1\leq i\leq n\atop i\geq k,\,j<l}a_{i,j}(t_{k,l}-t_{k-1,l})
+\sum_{1\leq i\leq n\atop i\geq k,\,j<l}a_{k,l}(t_{i,j}-t_{i-1,j})+\sum_{1\leq i\leq n\atop i\geq k,\,j<l}(t_{i,j}-t_{i-1,j})(t_{k,l}-t_{k-1,l})\\
&=\sum_{1\leq i\leq n\atop j<l}a_{i,j}t_{i,l}
-\sum_{1\leq i\leq n\atop j<l}a_{i+1,l}t_{i,j}+\sum_{1\leq i\leq n\atop  j<l}(t_{i,j}-t_{i-1,j})t_{i,l}\\
&=\sum_{1\leq i\leq n\atop j<l}(a_{i,j}-t_{i-1,j})t_{i,l}-\sum_{1\leq i\leq n\atop j>l}(a_{i+1,j}-t_{i,j})t_{i,l}.
\end{split}
\end{equation*}
Furthermore, since $\ro(T)=\al$ and $\co(B)=\ro(A)$ we have $b_{i,i}=\sum_{j\in\mbz}(a_{i,j}-t_{i-1,j})$ and $\al_i=\sum_{l\in\mbz}t_{i,l}$ for each $i$, and hence
$$d_{A+T-\ti T}-d_A-d_B=-\sum_{1\leq i\leq n\atop j\geq l}(a_{i,j}-t_{i-1,j})t_{i,l}-\sum_{1\leq i\leq n\atop j>l}(a_{i+1,j}-t_{i,j})t_{i,l}.$$
Consequently, $\bt(T,A)=\sum_{1\leq i\leq n,\,j\geq l}(a_{i,j}-t_{i-1,j})t_{i,l}-\sum_{1\leq i\leq n,\,j>l}(a_{i+1,j}-t_{i,j})t_{i,l}$. The proof is completed.
\end{proof}

\section{Proof of the main theorem}

We now construct explicitly a subalgebra of the algebra 
$\afbfS(n):=\prod_{r\geq 1}\afbfSr$ and prove that this subalgebra is isomorphic to $\bfU(\afgl)$. Recall the elements $A(\dt,r)$ defined in \eqref{A(dt,la,r),A(dt,r)} and let
\begin{equation}\label{basis-fB}
A(\bfj)=(A(\bfj,r))_{r\geq 0}\in\afbfS(n)\quad\text{ and }\quad\fB=\{A(\bfj)\mid A\in\afThnpm,\,\bfj\in\afmbzn\}.
\end{equation}
Then $\fB$ is linearly independent by \cite[Prop.4.1(2)]{DF09}. Let $\afbfVn$ be  the $\mbq(v)$-subspace 
of $\afbfS(n)$ spanned by $\fB$. We will prove that $\afbfVn$
is a subalgebra of $\afbfS(n)$ isomorphic to $\bfU(\afgl)$ or $\dbfHa$ by \ref{presentation-dbfHa}(a). For this purpose, we need a larger spanning set containing $\fB$:
$$\widetilde\fB=\{A(\bfj, \la)\mid A\in\afThnpm,\,\bfj\in\afmbzn,\,\la\in\afmbnn\}$$
where $A(\dt,\la)=(A(\dt,\la,r))_{r\geq 0}$ with $A(\dt,\la,r)$ defined by
\begin{equation}
A(\dt,\la,r)=\sum_{\mu\in\afLa(n,r-\sg(A))}v^{\mu\centerdot\dt}
\leb{\mu\atop\la}\rib[A+\diag(\mu)]\;\quad(\text{cf. \cite[\S2]{Fu1}})
\end{equation}
Note that, for $\sigma(\la)\leq r$, $0(\bfj,\la, r)=\sum_{\mu\in\afLa(n,r),\la\leq\mu}v^{\mu\centerdot\dt}\leb{\mu\atop\la}\rib[\diag(\mu)]$

\begin{Lem}\label{spanning set of afbfVn}
The space $\afbfVn$ is spanned by the set $\widetilde \fB$. In other words, every $A(\bfj, \la)\in\afbfVn$.
\end{Lem}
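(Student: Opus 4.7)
The plan is to show directly that every $A(\bfj,\la)$ can be written as a finite $\mbq(v)$-linear combination of basis-like elements $A(\bfj+\bfs)\in\fB$. Since $\fB\subseteq\widetilde\fB$ via the choice $\la=\bfl$ (where $\leb{\mu\atop\bfl}\rib=1$, hence $A(\bfj,\bfl,r)=A(\bfj,r)$), the reverse containment is immediate, and so the lemma reduces to the single assertion $\widetilde\fB\subseteq\afbfVn$. The whole argument hinges on one elementary observation about the symmetric Gaussian polynomial: for each fixed $t\geq 0$, there is a finite family of coefficients $a_{t,k}\in\mbq(v)$ such that
$$
\leb{m\atop t}\rib=\sum_{k}a_{t,k}v^{km}\qquad\text{for every }m\geq 0.
$$
This is because the prefactor $v^{-t(m-t)}$ in the definition of $\leb{m\atop t}\rib$ is calibrated to symmetrize each factor: $v^{2(m-i+1)}-1$ becomes (up to a monomial in $v$) $v^{m-i+1}-v^{-(m-i+1)}$, and expanding the resulting product produces only exponents of $v^m$ in the range $\{-t,-t+2,\dots,t\}$ with coefficients lying in $\mbq(v)$ that are independent of $m$.

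Carrying this out is Step 1. Step 2 is to take the product over $1\leq i\leq n$ of the single-index expansion, yielding
$$
\leb{\mu\atop\la}\rib=\sum_{\bfs\in\mbz^{n}}d_{\bfs}\,v^{\bfs\centerdot\mu},
$$
a finite sum with $d_{\bfs}\in\mbq(v)$ depending only on $\la$. Here $\mbz^n$ is identified with $\afmbzn$ in the usual way, so each shift $\bfj+\bfs$ lies in $\afmbzn$. Step 3 is the substitution and interchange of (finite) sums in the defining formula for $A(\bfj,\la,r)$:
$$
A(\bfj,\la,r)=\sum_{\mu}v^{\mu\centerdot\bfj}\leb{\mu\atop\la}\rib[A+\diag(\mu)]=\sum_{\bfs}d_{\bfs}\sum_{\mu}v^{\mu\centerdot(\bfj+\bfs)}[A+\diag(\mu)]=\sum_{\bfs}d_{\bfs}\,A(\bfj+\bfs,r).
$$
Step 4 observes that the coefficients $d_{\bfs}$ and the index set of $\bfs$ do not depend on $r$, so the identity assembles coordinate-wise over $r\geq\sg(A)$ into an equality in $\afbfS(n)$:
$$
A(\bfj,\la)=\sum_{\bfs}d_{\bfs}\,A(\bfj+\bfs)\in\spann_{\mbq(v)}\fB=\afbfVn.
$$

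The only nontrivial technical point is Step 1, which is a routine (but deliberate) manipulation of the product formula for Gaussian binomials. Once the Laurent expansion in $v^m$ is in hand, Steps 2--4 are purely formal: one multiplies out, swaps two finite summations, and notices that all coefficients are $r$-independent. There is no representation-theoretic obstacle here; the statement is essentially a book-keeping identity relating the two natural spanning sets built from the same basis $\{[A+\diag(\mu)]\}$ of the affine $v$-Schur algebras.
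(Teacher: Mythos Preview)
Your proof is correct and takes a more direct route than the paper's. The paper first shows that $0(\bfj,\la)$ lies in the subalgebra $\afbfVnz$ generated by the $0(\pm\afbse_i)$, by recognizing it as the product $\prod_i\big(0(\afbse_i)^{j_i}\prod_{1\leq s\leq\la_i}\frac{0(\afbse_i)v^{-s+1}-0(-\afbse_i)v^{s-1}}{v^s-v^{-s}}\big)$; it then invokes a triangular identity from \cite{Fu1} expressing $0(\bfj,\la)A(\bfl)$ as $v^{\ro(A)\centerdot(\bfj+\la)}A(\bfj,\la)$ plus terms $A(\bfj-\mu,\la-\mu)$ with $\bfl<\mu\leq\la$, and finishes by induction on $\la$ together with the identity $0(\bfj')A(\bfl)=v^{\bfj'\centerdot\ro(A)}A(\bfj')$. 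You bypass all of this by noting that the symmetrized Gaussian $\leb{m\atop t}\rib=\prod_{i=1}^t\frac{v^{m-i+1}-v^{-(m-i+1)}}{v^i-v^{-i}}$ is, for fixed $t$, visibly a Laurent polynomial in $v^m$ of degree at most $t$ with coefficients in $\mbq(v)$ independent of $m$; substituting this expansion into the defining sum for $A(\bfj,\la,r)$ immediately yields $A(\bfj,\la)=\sum_{\bfs} d_{\bfs}\,A(\bfj+\bfs)$ with $r$-independent $d_{\bfs}$. Your argument is self-contained and avoids both the external reference and the induction; the paper's version has the minor side benefit of exhibiting an explicit product factorization of $0(\bfj,\la)$ inside $\afbfVnz$, though that is not used elsewhere.
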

\begin{proof}
Let $\afbfVnz$ be the $\mbq(v)$-subalgebra of $\afbfS(n)$ generated by $0(\pm\afbse_i)$ for $1\leq i\leq n$.
Then the set $\{0(\bfj)\mid\bfj\in\afmbzn\}$ forms a $\mbq(v)$-basis for $\afbfVnz$.
Since 
$$0(\dt,\la,r)=\prod_{1\leq i\leq n}\bigg(0(\afbse_i,r)^{j_i}
\prod_{1\leq s\leq\la_i}\frac{0(\afbse_i,r)v^{-s+1}-0(-\afbse_i,r)v^{s-1}}{v^s-v^{-s}}\bigg),\text{ where }\sigma(\la)\leq r,$$
we have
\begin{equation}\label{0span}
0(\dt,\la)=\prod_{1\leq i\leq n}\bigg(0(\afbse_i)^{j_i}
\prod_{1\leq s\leq\la_i}\frac{0(\afbse_i)v^{-s+1}-0(-\afbse_i)v^{s-1}}{v^s-v^{-s}}\bigg)\in\afbfVnz.
\end{equation}
On the other hand,  by the proof of \cite[3.4]{Fu1}), we have
$$0(\dt,\la)A(\bfl)=v^{\ro(A)\centerdot(\dt+\la)}A(\dt,\la)+
\sum_{\mu\in\mbnn,\,\bfl<\mu\leq\la}
v^{\ro(A)\centerdot(\dt+\la-\mu)}\leb{\ro(A)\atop\mu}\rib
A(\dt-\mu,\la-\mu).$$
By induction, we see that $A(\dt,\la)\in\spann\{0(\dt,\la)A(\bfl)\mid A\in\afThnpm,\,\dt\in\afmbzn,\,\la\in\afmbnn\}$.
Thus, by \eqref{0span}, $A(\dt,\la)\in\spann\{0(\bfj)A(\bfl)\mid A\in\afThnpm,\,\bfj\in\afmbzn\}$. This span equals $\afbfVn$ by \cite[(4.2.1)]{DF09} (i.e., \ref{B(bfl,r)A(bfj,r)}(1) below).
\end{proof}

 For $T=(t_{i,j})\in\afThn$ let $\de_T$ be the diagonal of $T$, i.e.,
$$\de_T=(t_{i,i})_{i\in\mbz}\in\afmbnn.$$
We now use \ref{[B][A]} to derive multiplication formulas of an arbitrary basis element by a ``semisimple generators'', which is the key to solving the realisation problem. Recall the notation in \eqref{A^+,A^-,A^0}.
\begin{Prop}\label{B(bfl,r)A(bfj,r)}
Let $\bfj\in\afmbzn$, $A\in\afThnpm$, $\al\in\afmbnn$, and $S_\al=\sum_{1\leq i\leq n}\al_i\afE_{i,i+1}$.
The following identities holds in $\afbfVn$:
\begin{itemize}
\item[(1)] $0(\bfj')A(\bfj)=\up^{\bfj'\centerdot\ro(A)}A(\bfj'+\bfj)$ and $A(\bfj)0(\bfj')=\up^{\bfj'\centerdot\co(A)}A(\bfj'+\bfj)$ {\rm(\cite[(4.2.1)]{DF09})}.
\item[(2)] $\displaystyle S_\al(\bfl)A(\bfj)=
\sum_{T\in\afThn\atop\ro(T)=\al}v^{f_{A,T}}\prod_{1\leq i\leq n\atop j\in\mbz,\,j\not=i}
\ol{\dleb{a_{i,j}+t_{i,j}-t_{i-1,j}\atop t_{i,j}}\drib}(A+T^\pm-\ti T^\pm)(\bfj_T,\de_T)$,\\
where $\bfj_T=\bfj+\sum_{1\leq i\leq n}(\sum_{j<i}(t_{i,j}-t_{i-1,j}))\afbse_i$ and\vspace{-1ex}
\begin{equation*}
\begin{split}
f_{A,T}&=\sum_{1\leq i\leq n\atop j\geq l,\,j\not=i}
a_{i,j}t_{i,l}-\sum_{1\leq i\leq n\atop j>l,\,j\not=i+1}a_{i+1,j}t_{i,l}
-\sum_{1\leq i\leq n\atop j\geq l,\,j\not=i}t_{i-1,j}t_{i,l}+\sum_{1\leq i\leq n\atop j>l,\,j\not=i,\,j\not=i+1}t_{i,j}t_{i,l}\\
&\qquad+\sum_{1\leq i\leq n\atop j<i+1}t_{i,j}t_{i+1,i+1}+\sum_{1\leq i\leq n}j_i(t_{i-1,i}-t_{i,i});
\end{split}
\end{equation*}
\item[(3)] $\displaystyle {}^t\!S_\al(\bfl)A(\bfj)=
\sum_{T\in\afThn\atop\ro(T)=\al}v^{f_{A,T}'}\prod_{1\leq i\leq n\atop j\in\mbz,\,j\not=i}
\ol{\dleb{a_{i,j}-t_{i,j}+t_{i-1,j}\atop t_{i-1,j}}\drib}(A-T^\pm+\ti T^\pm)(\bfj'_T,\de_{\ti T})$,\\
where $\bfj'_T=\bfj+\sum_{1\leq i\leq n}(\sum_{j>i}(t_{i-1,j}-t_{i,j}))\afbse_i$ and
\begin{equation*}
\begin{split}
f_{A,T}'&=\sum_{1\leq i\leq n\atop l\geq j,\,j\not=i}
a_{i,j}t_{i-1,l}-\sum_{1\leq i\leq n\atop l>j,\,j\not=i}a_{i,j}t_{i,l}
-\sum_{1\leq i\leq n\atop j\geq l,\,l\not=i}t_{i-1,j}t_{i,l}+\sum_{1\leq i\leq n\atop j>l,\,l\not=i,\,l\not=i+1}t_{i,j}t_{i,l}\\
&\qquad+\sum_{1\leq i\leq n\atop i<j}t_{i,j}t_{i-1,i}+\sum_{1\leq i\leq n}j_i(t_{i,i}-t_{i-1,i}).
\end{split}
\end{equation*}
\end{itemize}
In particular, $\afbfVn$ is closed under the multiplication by the ``generators'' $0(\bfj), S_\al(\bfl), {}^t\!S_\al(\bfl)$ for all $\bfj\in\mbn$ and $\al\in\mbnn$.
\end{Prop}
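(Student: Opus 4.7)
Part (1) is already recorded in \cite[(4.2.1)]{DF09}, so the task reduces to establishing the formulas in (2) and (3); by symmetry (transposing everything) it suffices to prove (2). The strategy is to expand $S_\al(\bfl)A(\bfj)$ in each component $\afbfSr$ using the definition \eqref{A(dt,la,r),A(dt,r)}, apply the multiplication formula \ref{[B][A]}(1), and then repackage the answer along the generating series $r\mapsto r+1$ so that varying diagonal entries are absorbed into new elements of the form $(A+T^\pm-\ti T^\pm)(\bfj_T,\de_T)$. Closure of $\afbfVn$ under multiplication by $0(\bfj)$, $S_\al(\bfl)$ and ${}^tS_\al(\bfl)$ is then immediate from \ref{spanning set of afbfVn}.

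In detail, fix $r$ and write $S_\al(\bfl,r)=\sum_{\mu}[S_\al+\diag(\mu)]$ and $A(\bfj,r)=\sum_\nu v^{\nu\centerdot\bfj}[A+\diag(\nu)]$, with the sums constrained so that the entries of the indicated basis elements are nonnegative and $\co(S_\al+\diag(\mu))=\ro(A+\diag(\nu))$; the latter condition forces $\mu_i=\al_{i-1}+\nu_i-\sum_{l}a_{i,l}$. Applying \ref{[B][A]}(1) with $B=S_\al+\diag(\mu)$ and the matrix $A$ there replaced by $A+\diag(\nu)$ produces
\[
S_\al(\bfl,r)A(\bfj,r)=\sum_{\nu,T}v^{\nu\centerdot\bfj+\bt(T,A+\diag(\nu))}\prod_{1\le i\le n,\,j\in\mbz}\!\!\overline{\dleb{a_{i,j}+t_{i,j}-t_{i-1,j}\atop t_{i,j}}\drib}\,[A+\diag(\nu)+T-\ti T],
\]
where $T$ ranges over $\afThn$ with $\ro(T)=\al$ (the contribution of a diagonal entry $t_{i,i}$ of $T$ is forced to be $0$ for off-diagonal factors, and handled separately for the diagonal ones). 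The next step is the crucial reorganisation: decompose $T=T^\pm+T^0$ and let $\de_T=(t_{i,i})$, so the matrix appearing above takes the shape $A+T^\pm-\ti T^\pm+\diag(\rho)$ for a diagonal $\rho$ determined by $\nu$, $T^0$ and the off-diagonal shifts $T^\pm-\ti T^\pm$ along the diagonal. As $\nu$ and $T^0$ vary subject to the constraints above, $\rho$ sweeps out exactly the diagonals needed to assemble $(A+T^\pm-\ti T^\pm)(\bfj_T,\de_T)$, where the adjustment $\bfj\mapsto\bfj_T=\bfj+\sum_i(\sum_{j<i}(t_{i,j}-t_{i-1,j}))\afbse_i$ comes from re-expressing $\nu\centerdot\bfj$ in terms of the new diagonal, and the $\leb{\mu\atop\de_T}\rib$-factor in the definition of $(-)(\bfj_T,\de_T)$ arises from summing the remaining binomial coefficients $\dleb{a_{i,i}+t_{i,i}-t_{i-1,i}\atop t_{i,i}}\drib$ together with the dependence of $\bt(T,A+\diag(\nu))$ on the $i$-th diagonal entry.

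What is left is a careful bookkeeping of the $v$-exponent. Starting from $\bt(T,A+\diag(\nu))$ given by \ref{[B][A]}(1), substituting $a_{i,i}\mapsto a_{i,i}+\nu_i$ and isolating all terms that involve the diagonal indices $j=i$ or $l=i$, those terms combine with $\nu\centerdot\bfj$ and the weight $\mu\centerdot\bfj_T$ appearing in $(A+T^\pm-\ti T^\pm)(\bfj_T,\de_T,r')$ to leave precisely the displayed exponent
\[
f_{A,T}=\sum_{j\ge l,\,j\ne i}a_{i,j}t_{i,l}-\sum_{j>l,\,j\ne i+1}a_{i+1,j}t_{i,l}-\sum_{j\ge l,\,j\ne i}t_{i-1,j}t_{i,l}+\sum_{j>l,\,j\ne i,i+1}t_{i,j}t_{i,l}+\sum_{j<i+1}t_{i,j}t_{i+1,i+1}+\sum_i j_i(t_{i-1,i}-t_{i,i}),
\]
independent of $\nu$ and of the component $r$. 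The main obstacle in this proof is precisely this last step: keeping a correct and consistent accounting of which diagonal pieces of $T$ and $A+\diag(\nu)$ are absorbed into $\de_T$, which into $\bfj_T$, and which remain in $f_{A,T}$. Formula (3) follows by an entirely parallel computation using \ref{[B][A]}(2), and the closure statement is then a direct consequence together with \ref{spanning set of afbfVn}.
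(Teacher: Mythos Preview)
Your proposal is correct and follows essentially the same approach as the paper: expand $S_\al(\bfl,r)A(\bfj,r)$ via \eqref{A(dt,la,r),A(dt,r)}, apply \ref{[B][A]}(1) to each product $[S_\al+\diag(\mu)][A+\diag(\gamma)]$, then for each fixed $T$ reindex the diagonal sum to recognise $(A+T^\pm-\ti T^\pm)(\bfj_T,\de_T,r)$, with the exponent bookkeeping yielding $f_{A,T}$. One small point of care: it is not that ``$\nu$ and $T^0$ vary together''---for each fixed $T$ (diagonal included) the paper makes the single change of summation variable $\gamma\mapsto\nu:=\gamma+\de_T-\de_{\ti T}$, which turns the diagonal Gaussian factors $\overline{\dleb{\gamma_i+t_{i,i}-t_{i-1,i}\atop t_{i,i}}\drib}$ into the $\leb{\nu\atop\de_T}\rib$ appearing in the definition of $(-)(\bfj_T,\de_T,r)$; the split $\bt(T,A+\diag(\gamma))=\bt_{A,T}+\bt_{\nu,T}$ then isolates exactly the $\nu$-dependent piece that combines with $\gamma\centerdot\bfj$ to give $\nu\centerdot\bfj_T$, leaving $f_{A,T}$ independent of $r$.
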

\begin{proof} We only prove (2). If $r<\sigma(A)$, the $r$-th components of both sides are 0. Assume now
$r\geq\sigma(A)$. By \ref{[B][A]} and noting the fact that, for $X,Y\in\afThnr$, 
$[X][Y]\neq0\implies \co(X)=\ro(Y)$  the $r$-th component of $S_\al(\bfl)A(\bfj)$ becomes
\begin{equation*}
\begin{split}
S_\al(\bfl,r)A(\bfj,r)&=\sum_{\ga\in\afLa(n,r-\sg(A))}v^{\ga\centerdot\bfj}
\bigg[S_\al+\diag\bigg(\ga+\ro(A)-\sum_{1\leq i\leq n}\al_i\afbse_{i+1}\bigg)\bigg][A+\diag(\ga)]\\
&=\sum_{T\in\afThn\atop\ro(T)=\al}
\prod_{1\leq i\leq n\atop j\in\mbz,\,j\not=i}\ol{\dleb{a_{i,j}+t_{i,j}-t_{i-1,j}\atop t_{i,j}}\drib}x_T
\end{split}
\end{equation*}
where $$x_T=
\sum_{\ga\in\afLa(n,r-\sg(A))}
v^{\ga\centerdot\bfj+\bt(T,A+\diag(\ga))}\ol{\dleb\ga+\de_T-\de_{\ti T}\atop\de_T)\drib}
[A+\diag(\ga)+T-\ti T].
$$
Let $A+\diag(\ga)=(a_{i,j}^\ga)$. Then $a_{i,j}^\ga=a_{i,j}$ for $i\not=j$ and $a_{i,i}^\ga=\ga_i$. Let $\nu=\ga+\de_T-\de_{\ti T}$. Then
\begin{equation*}
\begin{split}
\bt(T,A+\diag(\ga))&=\sum_{1\leq i\leq n\atop j\geq l}(a_{i,j}^\ga-t_{i-1,j})t_{i,l}-\sum_{1\leq i\leq n\atop j>l}(a_{i+1,j}^\ga-t_{i,j})t_{i,l}\\
&=\sum_{1\leq i\leq n\atop j\geq l,\,j\not=i}(a_{i,j}-t_{i-1,j})t_{i,l}+\sum_{1\leq i\leq n\atop i\geq l}(\nu_i-t_{i,i})t_{i,l}-\sum_{1\leq i\leq n\atop j>l,\,j\not=i+1}(a_{i+1,j}-t_{i,j})t_{i,l}\\
&\qquad -\sum_{1\leq i\leq n\atop i+1>l}(\nu_{i+1}-t_{i+1,i+1})t_{i,l}\\
&=\bt_{A,T}+\bt_{\nu,T},
\end{split}
\end{equation*}
where $\bt_{\nu,T}=\sum_{1\leq i\leq n,\,i\geq l}\nu_it_{i,l}-\sum_{1\leq i\leq n,\,i+1>l}\nu_{i+1}t_{i,l}$ and
\begin{equation*}
\begin{split}
\bt_{A,T}&=\sum_{1\leq i\leq n\atop j\geq l,\,j\not=i}(a_{i,j}-t_{i-1,j})t_{i,l}-\sum_{1\leq i\leq n\atop j>l,\,j\not=i+1}a_{i+1,j}t_{i,l}+\sum_{1\leq i\leq n\atop j>l,\,j\not=i,i+1}t_{i,j}t_{i,l}\\
&\qquad -\sum_{1\leq i\leq n}t_{i,i}^2+\sum_{1\leq i\leq n\atop i+1>l}t_{i+1,i+1}t_{i,l}.
\end{split}
\end{equation*}
Clearly, we have  $\ol{\dbbl{\nu\atop\de_T}\dbbr}=v^{\de_T\centerdot(\de_T-\nu)}\bbl{\nu\atop
\de_T}\bbr$, $\bt_{A,T}+\de_T\centerdot\de_T+\bfj\centerdot(\de_{\ti T}-\de_T)=f_{A,T}$ and $\bt_{\nu,T}+\nu\centerdot(\bfj-\de_T)=\nu\centerdot\bfj_T$. This implies that
\begin{equation*}
\begin{split}
x_T&=v^{\bt_{A,T}+\de_T\centerdot\de_T+\bfj\centerdot(\de_{\ti T}-\de_T)}
\sum_{\nu\in\afLa(n,r-\sg(A+T^\pm-\ti T^\pm))}v^{\bt_{\nu,T}+\nu\centerdot(\bfj-\de_T)}\leb{\nu\atop\de_T}\rib\\
&\qquad\times
[A+T^\pm-\ti T^\pm+\diag(\nu)]\\
&=v^{f_{A,T}}(A+T^\pm-\ti T^\pm)(\bfj_T,\de_T,r),
\end{split}
\end{equation*}
proving (2).
\end{proof}

For $A,B\in\afThnpm$, define the ordering on $\afThn$ by setting
\begin{equation}\label{order2}
A\preceq B\iff \sum\limits_{s\leq i,t\geq j}a_{s,t}\leq \sum\limits_{s\leq i,t\geq j}b_{s,t},\,\forall i<j,\text{ and }
\sum\limits_{s\geq i,t\leq j}a_{s,t}\leq \sum\limits_{s\geq i,t\leq j}a_{s,t},\,\forall i>j.
\end{equation}

\begin{Prop}\label{triangular formula in A(bfj)} With the notation in \eqref{A^+,A^-,A^0}
 we have, for any $A\in\afThnpm$ and $\bfj\in\afmbnn$,
\[
A^+(\bfl)0(\bfj)A^-(\bfl)=v^{\bfj\centerdot(\co(A^+)+\ro(A^-))}A(\bfj)+ \sum_{B\in\afThnpm
\atop B\p A,\,\bfj'\in\afmbnn}f_{A,\bfj}^{B,\bfj'}B(\bfj'),
\]
 where $f_{A,\bfj}^{B,\bfj'}\in\mbq(v)$.
\end{Prop}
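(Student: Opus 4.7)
The plan is first to absorb the middle factor using \ref{B(bfl,r)A(bfj,r)}(1): since $A(\bfj)0(\bfj') = v^{\bfj'\centerdot\co(A)}A(\bfj'+\bfj)$, we obtain
$A^+(\bfl)0(\bfj) = v^{\bfj\centerdot\co(A^+)}A^+(\bfj)$,
so the assertion reduces to proving the two-factor triangular formula
\[
A^+(\bfj)A^-(\bfl) \;=\; v^{\bfj\centerdot\ro(A^-)}\,A(\bfj) \;+\; \sum_{B\prec A,\,\bfj'\in\afmbnn} g^{B,\bfj'}_{A,\bfj}\,B(\bfj').
\]
The symmetric base case when $A^+=0$ is immediate from the companion formula $0(\bfj)A^-(\bfl) = v^{\bfj\centerdot\ro(A^-)}A^-(\bfj)$ in \ref{B(bfl,r)A(bfj,r)}(1), which already gives an exact equality with no lower-order terms; analogously for $A^-=0$.

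For the inductive step I would argue by induction on $A^+$ under the order $\prec$, building $A^+(\bfl)$ up from semisimple generators via \ref{B(bfl,r)A(bfj,r)}(2). Pick a suitable $\alpha\in\afmbnn$ which forms the "outermost diagonal" of nonzero entries of $A^+$ (namely, let $\al_i$ equal the entry of $A^+$ immediately above the diagonal in row $i$) and set $\tilde A^+:=A^+-S_\al$, so $\tilde A^+\prec A^+$. Then $S_\al(\bfl)\,\tilde A^+(\bfj)\,A^-(\bfl)$ can be evaluated two ways: the inductive hypothesis expands $\tilde A^+(\bfj)A^-(\bfl)$ as $v^{\bfj\centerdot\ro(A^-)}\tilde A(\bfj)$ plus $\prec$-lower terms, and then \ref{B(bfl,r)A(bfj,r)}(2) expands the resulting product $S_\al(\bfl)\tilde A(\bfj)$. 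The distinguished term in the sum of (2) — namely the choice of $T$ placing all of $\al_i$ mass in position $(i,i+1)$ of the row $i$ of $\tilde A$ — yields exactly $A(\bfj)=(\tilde A+S_\al)(\bfj)$ with the right power of $v$ (matching $v^{\bfj\centerdot\ro(A^-)}$ after bookkeeping of the shift $\bfj_T$). All other choices of $T$ redistribute mass either further to the right in the upper-triangular band or into the strictly lower-triangular region, and must be shown to yield matrices $B$ with $B\prec A$. On the other side, expanding $S_\al(\bfl)$ against the inductively controlled $\prec$-lower terms of $\tilde A^+(\bfj)A^-(\bfl)$ stays within the $\prec$-lower stratum after one more application of (2), because adding one row of semisimple $\al$ cannot restore a strictly decreased partial sum appearing in the definition \eqref{order2}.

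The main obstacle is the precise control of the partial sums $\sum_{s\le i,t\ge j}$ and $\sum_{s\ge i,t\le j}$ under the substitution $M\mapsto M+T-\tilde T$ from \ref{[B][A]}(1): one has to verify that any $T$ differing from the distinguished choice strictly decreases the relevant partial sums of the upper-triangular part of the resulting matrix (equivalently, strictly increases the "southeast" partial sums appearing in the $\prec$-inequalities), and that any leakage of $T$ into the strictly lower-triangular block (which can happen because $\tilde T$ in \ref{[B][A]}(1) shifts the row index down by one and can collide with $A^-$) yields terms whose lower-triangular partial sums also strictly increase, placing them in the $\prec$-lower stratum. Once this monotonicity under the $T$-substitution is checked, combining it with the reassembly of diagonal sums (the $(\bfj_T,\de_T)$ appearing on the right of \ref{B(bfl,r)A(bfj,r)}(2) together with \ref{spanning set of afbfVn}) recombines the top contributions into $v^{\bfj\centerdot\ro(A^-)}A(\bfj)$ and collects all remaining contributions into the asserted $\prec$-lower sum, completing the induction.
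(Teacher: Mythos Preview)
Your reduction via \ref{B(bfl,r)A(bfj,r)}(1) is fine, but the inductive scheme has a genuine gap. Your choice of $\al$ as the first superdiagonal of $A^+$ and $\tilde A^+=A^+-S_\al$ fails to make progress whenever the first superdiagonal of $A^+$ vanishes: for instance, if $n=2$ and $A^+=\afE_{1,3}$, then $\al=0$ and $\tilde A^+=A^+$. Since every $S_\al$ lives on the first superdiagonal, no choice of $\al$ can peel off a higher diagonal while keeping $\tilde A^+\in\afThnp$. The correct way to express $A^+(\bfl)$ in terms of the $S_\al(\bfl)$ is not a simple diagonal-by-diagonal subtraction but uses the generic-extension monoid structure on nilpotent representations (this is the content of \cite[6.2]{DDX} invoked in the paper). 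A second structural issue: you compute $S_\al(\bfl)\,\tilde A^+(\bfj)\,A^-(\bfl)$ and extract $A(\bfj)$ from its expansion, but this is not the quantity $A^+(\bfj)A^-(\bfl)$ you are after; to recover the latter you would first need a triangular relation $S_\al(\bfl)\,\tilde A^+(\bfj)=c\,A^+(\bfj)+(\text{terms with }C^+\prec A^+)$ purely in the upper sector, which you never establish.

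More importantly, the step you flag as ``the main obstacle'' --- that every non-distinguished $T$ in \ref{B(bfl,r)A(bfj,r)}(2) yields a matrix strictly $\prec A$ --- is exactly the triangularity content the paper does \emph{not} reprove here. The paper's argument is much shorter and separates the two issues. First, it uses \cite[6.2]{DDX} together with \ref{zr} to write $A^+(\bfl)$ as a polynomial in the $S_\al(\bfl)$; then \ref{B(bfl,r)A(bfj,r)} and \ref{spanning set of afbfVn} guarantee that $A^+(\bfl)0(\bfj)A^-(\bfl)$ lies in $\afbfVn$, i.e.\ is a finite $\mbq(v)$-combination $\sum f_{A,\bfj}^{B,\bfj'}B(\bfj')$ with coefficients independent of $r$. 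Second, the identification of the leading term $v^{\bfj\centerdot(\co(A^+)+\ro(A^-))}A(\bfj)$ and the constraint $B\prec A$ on the remaining terms are obtained by comparing, at each fixed level $r$, with the triangular relation already proved in \cite[3.7.3]{DDF}. Your plan amounts to reproving that cited result via \ref{B(bfl,r)A(bfj,r)}(2), which may be possible but is substantially more work than what is needed here.
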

\begin{proof}
Let $\dbfHap$ be the subspace of $\dbfHa$ spanned by the elements $u_A^+$ for $A\in\afThnp$.
According to \cite[6.2]{DDX}, the algebra $\dbfHap$ is generated by the elements
$\ti u_{S_\al}^+$ for $\al\in\afmbnn$,  where $S_\al$ is defined as in \eqref{semisimple}. This together with \ref{zr} implies that
$A^+(\bfl)$ can be written as a linear combination of monomials in $S_\al(\bfl)$. Thus, by \ref{B(bfl,r)A(bfj,r)} and \ref{spanning set of afbfVn}, we conclude that there exist $f_{A,\bfj}^{B,\bfj'}\in\mbq(v)$ (independent of $r$) such that
\begin{equation}\label{eq1 triangular formula in A(bfj)}
A^+(\bfl)0(\bfj)A^-(\bfl)=\sum_{B\in\afThnpm
\atop  \bfj'\in\afmbnn}f_{A,\bfj}^{B,\bfj'}B(\bfj').
\end{equation}
On the other hand, by the triangular relation given in \cite[3.7.3]{DDF}, we have
$$
A^+(\bfl,r)0(\bfj,r)A^-(\bfl,r)=v^{\bfj\centerdot(\co(A^+)+\ro(A^-))}A(\bfj,r)+f
$$
where $f$ is a $\mbq(v)$-combination of $[B]$ with $B\in\afThnr$ and $B\p A$. Combining this with \eqref{eq1 triangular formula in A(bfj)} proves the assertion.
\end{proof}

The maps $\zeta_r$ given in \ref{zr} induce an algebra homomorphism
$$\zeta=\prod_{r\geq 1}\zeta_r:\dbfHa\lra\afbfS(n)=\prod_{r\geq 1}\afbfSr.$$
We now prove the conjecture formulated in \cite[5.5(2)]{DF09}.
\begin{Thm}\label{realization}
The $\mbq(v)$-space $\afbfVn$ is a subalgebra of $\afbfS(n)$
with $\mbq(v)$-basis ${\mathfrak B}$.
Moreover, the map  $\zeta$  is injective
and induces a $\mbq(v)$-algebra isomorphism $\dbfHa\overset\zeta\cong\afbfVn$.
\end{Thm}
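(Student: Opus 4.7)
The plan is to observe that, since $\fB$ is already linearly independent (see the comment after \eqref{basis-fB}) and spans $\afbfVn$ by definition, it will automatically be a $\mbq(v)$-basis of $\afbfVn$ once one knows that $\afbfVn$ is a subalgebra. Thus the proof splits into (i) showing $\afbfVn$ is a subalgebra of $\afbfS(n)$, and (ii) showing that $\zeta$ is injective with image $\afbfVn$. Both halves are driven by the triangular expansion of \ref{triangular formula in A(bfj)} together with the multiplication formulas of \ref{B(bfl,r)A(bfj,r)}.

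For (i), let $\fA$ denote the $\mbq(v)$-subalgebra of $\afbfS(n)$ generated by $\{0(\bfj), S_\al(\bfl), {}^t\!S_\al(\bfl) \mid \bfj \in \afmbzn,\,\al \in \afmbnn\}$; I claim $\fA = \afbfVn$. The inclusion $\fA \subseteq \afbfVn$ follows from \ref{spanning set of afbfVn} (which identifies $\afbfVn$ with $\spann\widetilde\fB$) together with \ref{B(bfl,r)A(bfj,r)}, which shows that multiplication of any element of $\widetilde\fB$ by any generator again lies in $\spann\widetilde\fB$; an induction on the length of a monomial in the generators then gives $\fA \subseteq \afbfVn$. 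For the reverse inclusion, I induct on $A \in \afThnpm$ with respect to the partial order $\preceq$ of \eqref{order2} to show $A(\bfj) \in \fA$ for every $\bfj \in \afmbzn$. The triangular formula of \ref{triangular formula in A(bfj)} rearranges to
\[
v^{\bfj\centerdot(\co(A^+)+\ro(A^-))}\,A(\bfj) = A^+(\bfl)\,0(\bfj)\,A^-(\bfl) - \sum_{B \prec A,\,\bfj'} f_{A,\bfj}^{B,\bfj'}\,B(\bfj'),
\]
and the induction hypothesis places the right-hand sum in $\fA$. Both $A^+(\bfl)$ and $A^-(\bfl)$ lie in $\fA$ as well, since by \cite[6.2]{DDX} (as recalled in the proof of \ref{triangular formula in A(bfj)}) $\dbfHap$ and $\dbfHam$ are generated as $\mbq(v)$-algebras by $\{\ti u_{S_\al}^+\}_\al$ and $\{\ti u_{S_\al}^-\}_\al$ respectively, and these map under $\zeta$ to $S_\al(\bfl)$ and ${}^t\!S_\al(\bfl)$ by \ref{zr}. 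Hence $A(\bfj) \in \fA$, so $\afbfVn = \fA$ is a subalgebra.

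For (ii), $\Image(\zeta)$ contains $0(\bfj), S_\al(\bfl), {}^t\!S_\al(\bfl)$ by \ref{zr} and is contained in $\afbfVn$ by the generation statement above, so $\Image(\zeta) = \fA = \afbfVn$. For injectivity, consider the PBW basis $\{\ti u_A^+ K^\bfj \ti u_B^- \mid A,B \in \afThnp,\,\bfj \in \afmbzn\}$ of $\dbfHa$ and observe that $(A,B) \mapsto M := A + {}^t\!B$ is a bijection from $\afThnp \times \afThnp$ onto $\afThnpm$ satisfying $M^+ = A$ and $M^- = {}^t\!B$. Under $\zeta$, each PBW element is sent to $A(\bfl)\,0(\bfj)\,({}^t\!B)(\bfl) = M^+(\bfl)\,0(\bfj)\,M^-(\bfl)$, which by \ref{triangular formula in A(bfj)} equals $v^{c_M}\,M(\bfj) + \sum_{N \prec M,\,\bfj'} f_{M,\bfj}^{N,\bfj'}\,N(\bfj')$ with $v^{c_M} \ne 0$. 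If a finite $\mbq(v)$-linear combination of PBW elements lies in $\ker\zeta$, pick a $\preceq$-maximal $M_0$ with nonvanishing coefficient and read off the $M_0(\bfj)$-components of the image: these receive no contribution from the $N \prec M_0$ terms, so linear independence of $\{M_0(\bfj)\}_\bfj \subseteq \fB$ forces all coefficients indexed by $M_0$ to vanish---a contradiction. Hence $\zeta$ is injective, and combined with $\Image(\zeta) = \afbfVn$ yields the desired isomorphism $\dbfHa \tong \afbfVn$.

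The most delicate point is verifying that the inductions on $A$ in (i) and on $M$ in (ii) are well-founded: one needs $\{N \in \afThnpm \mid N \preceq M\}$ to be finite for each fixed $M$, which follows from the boundedness of all partial sums in \eqref{order2} combined with nonnegativity of the entries of matrices in $\afThnpm$. Once this is in place, the rest of the argument is formal bookkeeping with the multiplication formulas and the triangular expansion already established.
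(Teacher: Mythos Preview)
Your argument is correct and uses the same key ingredients as the paper---the triangular formula \ref{triangular formula in A(bfj)}, the linear independence of $\fB$, and the fact that $\zeta$ sends the PBW basis element $\ti u_{A^+}^+K^\bfj\ti u_{{}^t\!(A^-)}^-$ to $A^+(\bfl)\,0(\bfj)\,A^-(\bfl)$---but you organize them less economically. The paper observes directly that the triangular relation of \ref{triangular formula in A(bfj)} makes $\{A^+(\bfl)\,0(\bfj)\,A^-(\bfl)\mid A\in\afThnpm,\ \bfj\in\afmbzn\}$ a \emph{second} basis of the vector space $\afbfVn$; since $\zeta$ carries a basis of $\dbfHa$ bijectively onto this set, $\zeta$ is a linear isomorphism onto $\afbfVn$, and the subalgebra property then comes for free because the image of an algebra homomorphism is automatically a subalgebra. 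You instead run the triangular argument twice: once (in part (i)) to show $\afbfVn$ equals the subalgebra $\fA$ generated by the semisimple generators, and again (in part (ii)) to prove injectivity by a maximal-element argument. Both passes are essentially the same induction on $\preceq$, so your route is sound but duplicates effort; the paper's single basis-to-basis step is cleaner. One small wording issue: in (i) you say \ref{B(bfl,r)A(bfj,r)} shows that multiplying an element of $\widetilde\fB$ by a generator stays in $\spann\widetilde\fB$, but those formulas are only stated for elements of $\fB$; this is harmless since $\spann\widetilde\fB=\spann\fB=\afbfVn$ by \ref{spanning set of afbfVn}, but the cleaner phrasing is that left multiplication by a generator preserves $\afbfVn$.
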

\begin{proof}
According to \cite[4.1]{DF09}, the set $\frak B$ forms a $\mbq(v)$-basis for $\afbfVn$. This together with \ref{triangular formula in A(bfj)} implies that
the set $\{A^+(\bfl)0(\bfj)A^-(\bfl)\mid A\in\afThnpm,\,\bfj\in\afmbzn\}$ forms another basis for $\afbfVn$. 
Note that, by \ref{presentation-dbfHa}(2), the set $\{\ti u_{A^+}^+K^\bfj\ti u_{{}^t\!(A^-)}^-\mid A\in\afThnpm,\,\bfj\in\afmbzn\}$ forms a $\mbq(v)$-basis for $\dbfHa$. Furthermore, by \ref{zr},
$$\zeta(\ti u_{A^+}^+K^\bfj\ti u_{{}^t\!(A^-)}^-)=A^+(\bfl)0(\bfj)A^-(\bfl).$$
 Thus, $\zeta$ takes a basis for $\dbfHa$ onto the basis for $\afbfVn$. It follows that $\zeta$ is injective and $\zeta(\dbfHa)=\afbfVn$. The proof is completed.
\end{proof}

Now, the Main Theorem \ref{MThm} follows immediately.

We end the paper with an application to the (untwisted) Ringel--Hall algebra of a cyclic quiver.

Let $\afbfVnp$ be the subalgebra of $\afbfVn$ spanned by $A(\bfl)$ for all $A\in\afThnp$. Then, by \ref{zr},
the map sending $\ti u_A$ to $A(\bfl)$ is an algebra isomorphism from $\dbfHap=\bfHall$ to $\afbfVnp$. In particular, the formula \ref{B(bfl,r)A(bfj,r)}(2) gives the following multiplication formula in the Ringel--Hall algebra $\Hall$
over $\sZ$:
$$\ti u_\al\ti u_A=
\sum_{T\in\afThnp\atop\ro(T)=\al}v^{f_{A,T}}\prod_{1\leq i\leq n\atop j\in\mbz,\,j\not=i}
\ol{\dleb{a_{i,j}+t_{i,j}-t_{i-1,j}\atop t_{i,j}}\drib}\ti u_{A+T-\ti T^+}$$
where 
$$f_{A,T}=\displaystyle\sum_{1\leq i\leq n\atop j\geq l,\,j\not=i}
a_{i,j}t_{i,l}-\sum_{1\leq i\leq n\atop j>l,\,j\not=i+1}a_{i+1,j}t_{i,l}
-\sum_{1\leq i\leq n\atop j\geq l,\,j\not=i}t_{i-1,j}t_{i,l}+\sum_{1\leq i\leq n\atop j>l,\,j\not=i,\,j\not=i+1}t_{i,j}t_{i,l}.$$
Untwisting the multiplication for $\Hall$ yields the following.

 \begin{Thm} The Ringel--Hall algebra $\Hall^\diamond$ is the algebra over the polynomial ring $\mbz[\bsq]$ ($\bsq=\up^2$) which is spanned by the basis $\{u_A\mid A\in\afThnp\}$ and generated by $\{u_\al\mid\al\in\afmbnn\}$ and whose (untwisted) multiplication is given by the formulas: for any $A\in\afThnp$ and $\al\in\afmbnn$,
$$ u_\al \diamond u_A =
\sum_{T\in\afThnp\atop\ro(T)=\al} \bsq^{\sum_{1\leq i \leq n,\,l< j}(a_{i,j}t_{i,l}-t_{i,j}t_{i+1,l})} \prod_{1\leq i\leq n\atop j\in\mbz,\,j\not=i}
{\dleb{a_{i,j}+t_{i,j}-t_{i-1,j}\atop t_{i,j}}\drib}u_{A+T-\ti T^+}$$
\end{Thm}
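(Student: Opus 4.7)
My strategy is to start from the twisted multiplication formula for $\tilde u_\al\tilde u_A$ in $\Hall$ that the authors have just derived (it is the last displayed formula before the theorem, obtained by transporting Proposition \ref{B(bfl,r)A(bfj,r)}(2) through the isomorphism $\dbfHap=\bfHall\cong\afbfVnp$ given by $\tilde u_A\mapsto A(\bfl)$ from Theorem \ref{zr}), and then perform three successive ``de-normalisations'' to reach the untwisted product $u_\al\diamond u_A$.

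First, I will replace the symmetrically-normalised basis $\tilde u_X=v^{\dim\End M(X)-\dim M(X)}u_X$ by the ordinary PBW basis $u_X$. This introduces a factor $v^{d(S_\al)+d(A)-d(A+T-\tilde T^+)}$ on each term, where $d(X):=\dim\End M(X)-\dim M(X)$; for nilpotent representations of $\tri$ this quantity is expressible as an explicit quadratic form in the entries, namely $d(X)=-\langle\bfd(X),\bfd(X)\rangle+\sum_i m_{ii}(X)$ with $m_{ii}$ reading off multiplicities of loops in $\End M(X)$. Second, I will pass from the twisted Hall product $u_\al u_A$ to the classical (``untwisted'') product $u_\al\diamond u_A$ via $u_\al u_A=v^{\langle\bfd(\al),\bfd(A)\rangle}\,u_\al\diamond u_A$, which strips off one more factor $v^{-\langle\al,\bfd(A)\rangle}$. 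Third, I will rewrite each symmetric Gaussian binomial using $\overline{\dleb N\atop t\drib}=v^{-2t(N-t)}\dleb N\atop t\drib$, so that all overlines disappear at the cost of an extra $v$-power. The surviving Gaussian binomials $\dleb a_{i,j}+t_{i,j}-t_{i-1,j}\atop t_{i,j}\drib$ are genuine polynomials in $\bsq=v^2$, which is consistent with working over $\mbz[\bsq]$.

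The bulk of the work is then to collect the accumulated exponent of $v$. Starting from $f_{A,T}$ and adding the contributions from steps 1--3, all odd-power pieces have to cancel and the resulting exponent should simplify to
\[
2\!\!\!\sum_{1\leq i\leq n,\,l<j}\!\!\!\bigl(a_{i,j}t_{i,l}-t_{i,j}t_{i+1,l}\bigr),
\]
giving the power of $\bsq$ in the statement. The cancellations involve standard identities for cyclic-quiver Euler forms and the elementary relations $\sum_{l\in\mbz}t_{i,l}=\al_i$ and, on the target matrix $A+T-\tilde T^+$, $\bfd(A+T-\tilde T^+)=\bfd(A)+\bfd(S_\al)$; these are what reconcile the $i$-diagonal corrections that distinguish $T$ from $\tilde T^+$ from the formally ``$j\neq i$'' product in the Gaussian binomials.

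Finally, I will note that $\{u_\al\mid\al\in\afmbnn\}$ generates $\Hall$ as an algebra (this is the result \cite[6.2]{DDX} already invoked in the proof of Proposition \ref{triangular formula in A(bfj)}), so that the multiplication formula just derived, together with the PBW basis $\{u_A\mid A\in\afThnp\}$, provides the complete presentation of $\Hall^\diamond$ claimed in the theorem. \emph{The main obstacle} will be the bookkeeping in the third paragraph: one must carefully match the $j\neq i$ exclusions in the product of Gaussian binomials with the diagonal-index contributions coming from $d(S_\al)$, $d(A)$, $d(A+T-\tilde T^+)$ and $\langle\bfd(\al),\bfd(A)\rangle$, and check that the surviving terms combine precisely into $\sum_{l<j}(a_{i,j}t_{i,l}-t_{i,j}t_{i+1,l})$, with the ``$t_{i,j}t_{i+1,l}$'' coming from the $\tilde T^+$ part of the shift on the target.
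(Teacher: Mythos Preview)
Your proposal is correct and follows exactly the route the paper takes: the paper simply says ``Untwisting the multiplication for $\Hall$ yields the following'' after displaying the $\tilde u_\al\tilde u_A$ formula, and your three steps (remove the tilde normalisation, strip the Euler-form twist, and convert the barred Gaussian binomials to unbarred ones) are precisely what that one-word proof amounts to. The only caveat is that your formula $d(X)=-\langle\bfd(X),\bfd(X)\rangle+\sum_i m_{ii}(X)$ is not quite in a usable form as written; you will need the explicit expression for $\dim\End M(A)$ in terms of the entries $a_{i,j}$ (or equivalently the combinatorial formula for $d(A)$ for nilpotent representations of $\tri$) to carry out the exponent bookkeeping, but this is standard and does not affect the validity of the plan.
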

In other words, if the Hall polynomial $\vi_{S_\al,A}^B$ is nonzero, then there exists
$T=(t_{i,j})\in\afThnp$ with $\ro(T)=\al$ such that $B=A+T-\ti T^+$ and 
$$\vi_{S_\al,A}^{A+T-\ti T^+}=\bsq^{\sum_{1\leq i \leq n,\,l< j}(a_{i,j}t_{i,l}-t_{i,j}t_{i+1,l})} \prod_{1\leq i\leq n\atop j\in\mbz,\,j\not=i}
{\dleb{a_{i,j}+t_{i,j}-t_{i-1,j}\atop t_{i,j}}\drib}.$$
Note that, when $\al$ defines a simple module or $A$ defines another semisimple module, the formula coincides with the formulas given in \cite[Th.~5.4.1]{DDF} (built on \cite[Th.~4.2]{DF09})  and \cite[Cor.~1.5]{DDM}.

\def\Inv{{\text{Inv}}}

\section{Appendix --- Proof of  Lemma \ref{length of elements in Dla}(2)}

For $w\in\affSr$ and $t\in\mbz$, let 
\begin{equation}\label{Inv(w)}
\aligned
\text{Inv}(w,t)&=\{(i,j)\in\mbz^2\mid1+t\leq i\leq r+t,\ i<j,\ w(i)>w(j)\}\\
\Inv(w)&=\Inv(w,0).\endaligned
\end{equation}
Then the number of inversions $\ell'(w):=|\text{Inv}(w,t)|$ is clearly independent of $t$.

\begin{Prop}\label{inversion}
If $w=ys$ with $y,w\in\affSr$ and $s\in S$ satisfies $\ell(w)=\ell(y)+1$, then $\ell'(w)=\ell'(y)+1$.
\end{Prop}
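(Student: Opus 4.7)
The plan is to establish $\ell'(w) = \ell'(y) + 1$ by explicitly tracking how the inversion set $\mathrm{Inv}(w)$ differs from $\mathrm{Inv}(y)$ when $w = ys_i$ for some $s_i \in S$. Since $\ell'(w,t)$ is independent of $t$, we first choose $t$ so that $s_i$ permutes the window $\{1+t, \ldots, r+t\}$ as a simple transposition of two adjacent elements $j_1, j_1+1$ with $j_1 \equiv i \pmod r$; concretely, take $t = 0$ for $1 \le i \le r-1$ and $t = -1$ for $i = r$. This makes the local action of $s_i$ on the chosen window transparent while leaving the translation-invariant count unchanged.

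The core observation is that $s_i$, viewed as a permutation of $\mathbb{Z}$, swaps $m \leftrightarrow m+1$ precisely when $m \equiv i \pmod r$; hence the involution $\sigma(j,k) := (s_i(j), s_i(k))$ preserves the order $j < k$ with the sole exception of the ``special'' pairs $(m, m+1)$ with $m \equiv i \pmod r$, where it reverses it (an elementary case check on $j, k \bmod r$). Using $w(j) = y(s_i(j))$, one then compares $\mathrm{Inv}(w)$ and $\mathrm{Inv}(y)$ row by row with $j$ in the window. For \emph{non-pivot} $j$ (i.e.\ $j \not\equiv i, i+1 \pmod r$) one has $w(j) = y(j)$; moreover, since $j \not\equiv i, i+1$, for any $m \equiv i \pmod r$ either both of $m, m+1$ are $> j$ or both are $\le j$, and in the former case the two indicators $[y(j) > y(m)]$ and $[y(j) > y(m+1)]$ appear symmetrically in both counts (merely reassigned between $k=m$ and $k=m+1$). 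Hence non-pivot rows contribute identically to $\ell'(w)$ and $\ell'(y)$. For the \emph{pivot} rows $j \in \{j_1, j_1+1\}$, the map $k \mapsto s_i(k)$ is a bijection of $\{k > j_1 + 1\}$ with itself, and it identifies the $w$-inversions $(j_1, k)$ (resp.\ $(j_1+1, k)$) with the $y$-inversions $(j_1+1, s_i(k))$ (resp.\ $(j_1, s_i(k))$); so these portions of the pivot rows also agree.

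The only residual pair is $(j_1, j_1+1)$ itself. Since $w(j_1) = y(j_1+1)$ and $w(j_1+1) = y(j_1)$, membership of this pair in $\mathrm{Inv}(w)$ is complementary to its membership in $\mathrm{Inv}(y)$. Combining the above gives
\[
\ell'(w) - \ell'(y) \;=\; [y(j_1) < y(j_1+1)] \,-\, [y(j_1) > y(j_1+1)] \;\in\; \{\pm 1\}.
\]
To fix the sign, invoke the standard characterization of affine length: $\ell(ys_i) = \ell(y) + 1$ iff $y(i) < y(i+1)$, which by the affine relation $y(j+r) = y(j)+r$ is equivalent to $y(j_1) < y(j_1+1)$. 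This forces the difference to be $+1$. The main technical obstacle is the non-pivot bookkeeping, which hinges on the seemingly innocuous fact that for $j \not\equiv i, i+1 \pmod r$ the two members of any special pair lie on the same side of $j$; once this is secured, all remaining matchings are a transparent application of the involution $\sigma$.
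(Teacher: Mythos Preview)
Your proof is correct and follows essentially the same strategy as the paper's: both shift the window so that the pivot indices $i_0,i_0+1$ lie inside it, invoke the criterion $\ell(ys_{i_0})=\ell(y)+1\iff y(i_0)<y(i_0+1)$, and then build bijections between pieces of $\mathrm{Inv}(w,t)$ and $\mathrm{Inv}(y,t)$ so that exactly one extra inversion survives. The only difference is organizational: the paper partitions pairs $(i,j)$ by the value $c(i_0,i,j)=|\{i_0,i_0+1\}\cap\{i,\bar j\}|\in\{0,1,2\}$ and treats the three strata separately (with a further four-way split when $c=1$), whereas you partition by whether the first coordinate is a pivot and then use the involution $k\mapsto s_i(k)$ uniformly on the tails $\{k>j_1+1\}$. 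Your packaging is arguably cleaner, since the single involution $\sigma$ handles what the paper splits into several explicit set-difference computations; the paper's finer decomposition, on the other hand, makes the location of the extra inversion (in the $c=2$ stratum) slightly more visible.
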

\begin{proof}Suppose $s=s_{i_0}$ for some $1\leq i_0\leq r$. By the hypothesis and \cite[4.2.3]{Shi86}, we have $y(i_0)<y(i_0+1)$. Fix $t\in\{0,1\}$ with
$i_0, i_0+1\in[1+t,r+t]$.
Let
$\sW=\text{Inv}(w,t)\text{ and }\sY=\text{Inv}(y,t).$ We want to prove that $|\sW|=|\sY|+1$. For $j\in\mbz$ and $i\in[i_0+1,i_0+r]$, let 
$$c(i_0,i,j):=|\{i_0,i_0+1\}\cap\{i,\bar j\}|,$$ where $\bar j$ denote the unique integer in $[1+t,r+t]$ such that $j\equiv\bar j\mod r$. For each $x\in\{0,1,2\}$, let 
$$\sW_x=\{(i,j)\in\sW\mid c(i_0,i,j)=x\}\text{ and } \sY_x=\{(i,j)\in\sY\mid c(i_0,i,j)=x\}.$$
Then we have disjoint unions $\sW=\sW_0\cup\sW_1\cup\sW_2$ and $\sY=\sY_0\cup\sY_1\cup\sY_2$.

For $j\in\mbz$ and $i\in[i_0+1,i_0+r]$, if $c(i_0,i,j)=0$, then $w(i)=y(i)$ and $w(j)=y(j)$. This implies $\sW_0=\sY_0$. Hence, $|\sW_0|=|\sY_0|$.
Since $y(i_0)<y(i_0+1)$, it follows that
$$\sW_2=\{(i_0,i_0+1+kr)\mid k\in\mbz_{\geq0}, y(i_0+1)>y(i_0)+kr\},$$
while
$$\sY_2=\{(i_0+1,i_0+kr)\mid k\in\mbz_{>0}, y(i_0+1)>y(i_0)+kr\}.$$
Hence, $|\sW_2|=|\sY_2|+1$. It remains to prove that $|\sW_1|=|\sY_1|$.
In this case, we have
$$\sW_1=\sW_{1,(i_0,\bullet)}\cup \sW_{1,(i_0+1,\bullet)}\cup\sW_{1,(\bullet,i_0)}\cup\sW_{1,(\bullet,i_0+1)},$$
where $\sW_{1,(i_0,\bullet)}=\{(i,j)\in\sW_1\mid i=i_0\}$, etc. Define $\sY_{1,(\bullet,\bullet)}$ similarly to get a similar partition for $\sY_1$. Then
the condition
$y(i_0)<y(i_0+1)$ implies the following
$$\aligned
\sY_{1,(i_0,\bullet)}\subseteq \sW_{1,(i_0,\bullet)},\qquad\sY_{1,(i_0+1,\bullet)}\supseteq \sW_{1,(i_0+1,\bullet)},\\
\sY_{1,(\bullet,i_0)}\supseteq \sW_{1,(\bullet,i_0)},\qquad\sY_{1,(\bullet,i_0+1)}\subseteq \sW_{1,(\bullet,i_0+1)}.
\endaligned
$$
But, since
$$\aligned
 \sW_{1,(i_0,\bullet)}\backslash \sY_{1,(i_0,\bullet)}&=\{(i_0,j)\mid i_0<j,j\in\mbz,\bar j\not\in\{i_0,i_0+1\},y(i_0)<y(j)<y(i_0+1)\}\text{ and}\\
\sY_{1,(i_0+1,\bullet)}\backslash  \sW_{1,(i_0+1,\bullet)}&=\{(i_0+1,j)\mid i_0+1<j,j\in\mbz,\bar j\not\in\{i_0,i_0+1\},y(i_0)<y(j)<y(i_0+1)\},\\
 \endaligned
 $$
it follows that  $|\sW_{1,(i_0,\bullet)}|+|\sW_{1,(i_0+1,\bullet)}|=|\sY_{1,(i_0,\bullet)}|+|\sY_{1,(i_0+1,\bullet)}|$. Similarly, one proves that $|\sW_{1,(\bullet,i_0)}|+|\sW_{1,(\bullet,i_0+1)}|=|\sY_{1,(\bullet,i_0+1)}|+|\sY_{1,(\bullet,i_0)}|$. This completes the proof.
\end{proof}

The following result given in \cite[(3.2.1.1)]{DDF} without proof follows immediately.

\begin{Coro}\label{combinatorial description of length}
For $w\in\affSr$ we have
$\ell(w)=\ell'(w)$.
\end{Coro}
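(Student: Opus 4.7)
The corollary follows by a straightforward induction on $\ell(w)$, using Proposition \ref{inversion} as the inductive step.

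For the base case $\ell(w)=0$, recall that $\ell(\rho^m u)=\ell(u)$ for all $m\in\mbz$ and $u\in W_r$, so $\ell(w)=0$ forces $w=\rho^m$ for some $m\in\mbz$. But $\rho^m(i)=i+m$, so $i<j$ implies $\rho^m(i)<\rho^m(j)$, and therefore $\Inv(\rho^m,t)=\varnothing$ for every $t$. Hence $\ell'(\rho^m)=0=\ell(\rho^m)$.

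For the inductive step, assume the corollary holds for all affine permutations of length less than $\ell(w)$, and suppose $\ell(w)\geq 1$. Choose a reduced expression and write $w=ys$ for some $s\in S$ and $y\in\affSr$ with $\ell(y)=\ell(w)-1$. The induction hypothesis gives $\ell'(y)=\ell(y)$, and Proposition \ref{inversion} gives $\ell'(w)=\ell'(y)+1$. Combining yields $\ell'(w)=\ell(y)+1=\ell(w)$, as required.

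Since Proposition \ref{inversion} has already been established and the base case is a direct computation, there is no substantive obstacle; the whole argument is a one-line induction once Proposition \ref{inversion} is in hand.
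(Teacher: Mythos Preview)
Your proof is correct and follows exactly the approach implicit in the paper, which simply states that the corollary ``follows immediately'' from Proposition~\ref{inversion}. Your explicit treatment of the base case $w=\rho^m$ and the inductive step via a reduced decomposition $w=ys$ is precisely the intended argument.
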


We now generalise the construction for the shortest representatives of double cosets of the symmetric group 
\cite[\S3]{Du} to the affine case.
For $A\in\afThnr$ with $\la=\ro(A)$ and $\mu=\co(A)$, define a pseudo matrix $A^-$ as follows: the entry $a_{i,j}$ is replaced by the sequence
$$\ul{c}_{i,j}=\ul{c}_{i,j}(A)=\bigg(\la_{k_0,i_0-1}+\sum_{t\leq j-1}a_{i,t}+1,\cdots,\la_{k_0,i_0-1}+\sum_{t\leq j-1}a_{i,t}+(a_{i,j}-1) ,\la_{k_0,i_0-1}+\sum_{t\leq j}a_{i,t}\bigg)$$
where $i=i_0+k_0n,j\in\mbz$ with $1\leq i_0\leq n$ and $k_0\in\mbz$. We define $\ti y_{A}\in\affSr$ by 
$$\ti y_{A}(i+kr)=a_i+kr,\text{ for all }1\leq i\leq r,k\in\mbz,$$
where $(a_1,a_2,\cdots,a_r)$ is the sequence obtained by reading the numbers in column 1 inside the subsequences from left to right and from top to bottom, and then in column 2, etc., and then in column $n$. In other words, it is the sequence obtained by ignoring 0's from $((\ul{c}_{k,1})_{k\in\mbz},
(\ul{c}_{k,2})_{k\in\mbz},\cdots,(\ul{c}_{k,n})_{k\in\mbz})$ with $(\ul{c}_{k,i})_{k\in\mbz}=(\cdots,\ul{c}_{1,i},\ul{c}_{2,i},\cdots,\ul{c}_{n,i},\cdots)$.

We are ready to prove Lemma \ref{length of elements in Dla}(2).
\begin{Prop}
Let  $A\in\afThnr$. Then $\ti y_A$ is the shortest representative of the double coset defined by $A$, i.e., $y_A=\ti y_A$, and
$$\ell(y_A)=\sum_{1\leq i\leq n\atop i<k;j>l}a_{ij}a_{kl}.$$
\end{Prop}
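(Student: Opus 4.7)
My plan is to first show that $\ti y_A$ coincides with the shortest representative $y_A$ of the double coset determined by $A$, and then to derive the length formula via a direct inversion count using Corollary~\ref{combinatorial description of length}.

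For the identification $\ti y_A = y_A$, I would establish three points. First, $\ti y_A \in \affSr$ is well defined: from $a_{i,j} = a_{i+n,j+n}$ one derives $\ul{c}_{i+n,j+n}(A) = r + \ul{c}_{i,j}(A)$, so the $r$ integers obtained by reading columns $1,\ldots,n$ of the pseudo-matrix form a complete set of residue representatives modulo $r$, and the rule $\ti y_A(i+kr) = a_i + kr$ yields a bijection of $\mbz$ commuting with translation by $r$. Second, $\jmath_\vtg(\la, \ti y_A, \mu) = A$: using the periodicities of both $A$ and $\ti y_A$, it suffices to check $|R_i^\la \cap \ti y_A(R_j^\mu)| = a_{i,j}$ for $1 \leq j \leq n$, which is transparent from the construction since the $r$ positions in $\bigcup_{1\leq j \leq n} R_j^\mu = \{1,\ldots,r\}$ map in order to the segments of the reading, and each $\ul{c}_{k,j} \subseteq R_k^\la$ has cardinality $a_{k,j}$. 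Third, $\ti y_A \in \afmsD_{\la, \mu}$: increasingness of $\ti y_A$ on each $R_j^\mu$ ($1 \leq j \leq n$) follows because within column $j$'s reading the rows are traversed in increasing order of $k$ while $R_k^\la < R_{k+1}^\la$ elementwise; increasingness of $\ti y_A^{-1}$ on each $R_i^\la$ ($1 \leq i \leq n$) is proved by writing $p \in \ul{c}_{i,j_p}$, $p' \in \ul{c}_{i,j_{p'}}$ for $p < p'$ in $R_i^\la$ (so $j_p \leq j_{p'}$) and analyzing the column shifts $k_0 \leq k_0'$ needed to bring $j_p,\, j_{p'}$ into $\{1,\ldots,n\}$: when $k_0 < k_0'$, the contribution $(k_0'-k_0)r \geq r$ to $\ti y_A^{-1}$ dominates the within-window reading-position difference (which lies in $(-r, r)$); when $k_0 = k_0'$, one reduces to the within-column reading order. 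The uniqueness of $\jmath_\vtg$ then forces $\ti y_A = y_A$.

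For the length, Corollary~\ref{combinatorial description of length} identifies $\ell(y_A)$ with $|\Inv(y_A)|$. I would classify each inversion $(p, q)$ by $p \in R_j^\mu$ (forcing $1 \leq j \leq n$), $q \in R_l^\mu$, $y_A(p) \in R_i^\la$, $y_A(q) \in R_k^\la$; the monotonicity properties from the previous step eliminate the diagonal cases $i = k$ and $j = l$ and reduce $p < q$, $y_A(p) > y_A(q)$ to $l > j$ and $i > k$ respectively. Counting gives $\ell(y_A) = \sum_{1 \leq j \leq n,\, l > j,\, i > k} a_{i,j} a_{k,l}$. Periodicity then changes the orbit representative so that $k$ (rather than $j$) lies in $\{1,\ldots,n\}$, and the symmetry swap $(i,j) \leftrightarrow (k,l)$ of the commutative summand converts this into the stated formula $\sum_{1 \leq i \leq n,\, i < k,\, j > l} a_{i,j} a_{k,l}$.

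The principal technical obstacle is the verification that $\ti y_A^{-1}$ is increasing on the fundamental $\la$-blocks: one must balance the $\geq r$ contribution of a differing column shift against the $< r$ reading-position difference, and then handle the equal-shift case by comparing within-column positions. A secondary subtlety in the length calculation is the conversion of the "natural" inversion form into the stated one, handled by periodicity together with the $(i,j) \leftrightarrow (k,l)$ symmetry of the summand.
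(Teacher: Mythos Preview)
Your proposal is correct, but it takes a genuinely different route from the paper in establishing $\ti y_A=y_A$. You verify directly that $\ti y_A\in\afmsD_{\la,\mu}$ via the monotonicity characterisation \eqref{minimal coset representative}, which forces you to check that $\ti y_A^{-1}$ is increasing on each $R_i^\la$; as you note, this is the delicate step, handled by comparing the column shifts $k_0,k_0'$ and bounding the within-window position difference by $r$. The paper instead avoids this check entirely: it first shows that for \emph{every} $w$ in the double coset there is an injection $\sN\hookrightarrow\Inv(w)$, giving the uniform lower bound $N\leq\ell(w)$; it then shows only that $\ti y_A$ lies in the correct double coset and that the injection is a bijection for $w=\ti y_A$ (using $\ti y_A(C_{ji}({}^t\!A))=C_{ij}(A)$), whence $\ell(\ti y_A)=N\leq\ell(y_A)\leq\ell(\ti y_A)$ forces $\ti y_A=y_A$.

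Your inversion count for the length is essentially the same block classification the paper encodes in the bijection $\vi_{\ti y_A}$. The trade-off is this: your argument is conceptually more direct (it uses exactly the standard criterion for minimality) but requires the fiddly $\la$-block monotonicity; the paper's argument is slicker in that minimality falls out of the length computation itself, and as a by-product yields the lower bound $N\leq\ell(w)$ for all $w$ in the double coset.
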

\begin{proof} Let $\la=ro(A)$ and $\mu=co(A)$. For $w\in\fS_\la y_A\fS_\mu$, let
$$\sN=\bin_{1\leq l\leq n\atop i<k;j>l}(R_i^\la\cap w(R_j^\mu))\times(R_k^\la\cap w(R_l^\mu))\text{ and }N=|\sN|=\sum_{1\leq i\leq n\atop i<k;j>l}a_{ij}a_{kl}.$$
 Then there is a injective map $\vi_w$ defined as follows:
\begin{equation*}
\vi_{w}:\sN\lra\Inv(w),\;\;(c,d)\longmapsto (w^{-1}(d),w^{-1}(c)).
\end{equation*}
Hence by \ref{combinatorial description of length} we have
$N\leq \ell(w)$. In particular, we have $\ell(y_A)\geq N$.

For $i,j\in\mbz$ let $C_{ij}(A)$ denote the set of members of the sequence $\ul{c}_{ij}(A)$. By definition we have,
for $i\in\mbz$ and $1\leq j\leq n$,
$$R_i^\la=\bin_{l\in\mbz}C_{il}(A),\quad\text{and}\quad \ti y{_A}(R_j^\mu)=\bin_{k\in\mbz}C_{kj}(A).$$
 It is easy to see that $C_{ij}(A)+tr=C_{i+tn,j+tn}(A)$ for $i,j,t\in\mbz$. Hence,
for $j=j_0+tn$ with $1\leq j_0\leq n$ and $t\in\mbz$, 
$$\ti y{_A}(R_j^\mu)=tr+ w{_A}(R_{j_0}^\mu)=tr+\bin_{k\in\mbz}C_{kj_0}(A)=\bin_{k\in\mbz}C_{k+tn,j_0+tn}(A)=\bin_{k\in\mbz}C_{k,j}(A).$$ Thus, we have $C_{ij}(A)=R_i^\la\cap \ti y{_A}(R_j^\mu)$ for $i,j\in\mbz$ and so $a_{ij}=|R_i^\la\cap \ti y{_A}(R_j^\mu)|$ for $i,j\in\mbz$.
This implies that $\ti y_A\in\fS_\la y_A\fS_\mu$ and, hence, $\ell(\ti y_A)\geq\ell(y_A)\geq N$. 
Observe  that $\ti y{_A}(C_{ji}(\tA))=C_{ij}(A)$ for $i,j\in\mbz$, where $\tA$ is the transpose matrix of $A$.

We now prove that $\ell(\ti y_A)=N$ by showing that $\vi_{\ti y{_A}}$ is surjective. Let $(a,b)\in\Inv(\ti y{_A})$. Since $\mbz=\bin_{s,t\in\mbz}C_{s,t}(A)$, there exist $i,j,k,l\in\mbz$ such that
$\ti y{_A}(a)\in C_{kl}(A)$ and $\ti y{_A}(b)\in C_{ij}(A)$. Since $1\leq a\leq r$  we have $1\leq l\leq n$. Since $\ti y{_A}(a)>\ti y{_A}(b)$ we have
either $i<k$ or $i=k$ and $j<l$. On the other hand, the conditions $a\in \ti y{_A}^{-1}(C_{kl}(A))=C_{lk}(\tA)$, $b\in \ti y{_A}^{-1}(C_{ij}(A))=C_{ji}(\tA)$ and $a<b$ imply either $l<j$ or $l=j$ and $k<i$. Hence, we must have $i<k$ and $l<j$. Therefore, the map $\vi_{\ti y{_A}}$ is bijective, proving $\ell(\ti y{_A})=N$.
\end{proof}

\end{document}